\newcommand{\A}{{\textbf{(A)}}}
\newcommand{\B}{{\textbf{(B)}}}
\newcommand{\C}{{\textbf{(C)}}}
\newcommand{\bS}{{\mathbb{S}}} 
\newcommand{\bZ}{{\mathbb{Z}}}
\newtheorem{thm}{Theorem}
\newtheorem{lem}[thm]{Lemma}
\begin{document}

\title{\bf{Landau's Theorem Revisited Again}}

\author{\Large K. B. Reid\footnote{Corresponding author} and M. Santana\\
Department of Mathematics\\
California State University San Marcos\\
San Marcos, CA 92096-0001\\
{\tt msantana@csusm.edu}\\
{\tt breid@csusm.edu}}

\date{}

\maketitle

\thispagestyle{empty}

\begin{center}
Dedicated to the memory of Ralph Stanton.
\end{center}

\begin{abstract}
We give a new proof of the sufficiency of Landau's conditions for a non-decreasing sequence of integers to be the score sequence of a tournament.  The proof involves jumping down a total order on sequences satisfying Landau's conditions and provides a $O(n^2)$ algorithm that can be used to construct a tournament whose score sequence is any in the total order.  We also compare this algorithm with two other algorithms that jump along this total order, one jumping down and one jumping up.
\end{abstract}

\section{Introduction}

Necessary and sufficient conditions for a non-decreasing sequence of $n$ integers to be the score sequence of some tournament of order $n$, first given in 1953 by Landau \cite{Landau}, is a fundamental result of tournament theory.  Most every study involving scores in tournaments involves this result, so it is one of the very important tools in the study of tournaments.  The necessity of the conditions is very easy to prove, and most treatments are essentially identical.  However, many different proofs of the sufficiency of the conditions have appeared in the literature.  A survey of many of those proofs up to 1996 was given by Reid \cite{R2}, and we summarize those developments next.  

Landau's original proof appeared in 1953.  Matrix considerations by Fulkerson \cite{Fulkerson} in 1960 led to a proof, discussed more recently by Brualdi and Ryser \cite{BrualdiRyser} in 1991.  Berge \cite{Berge} also gave a network flow proof that appeared in 1960.  Alway \cite{Alway} gave a proof in 1962.  Fulkerson \cite{Fulkerson2} gave a constructive proof via matrices in 1965.  Ryser's 1964 proof \cite{Ryser} is the proof that appears in Moon's 1968 monograph \cite{Moon}.  Also in 1968, Brauer, Gentry, and Shaw \cite{BGS} gave an inductive proof.  In 1978, Mahmoodian \cite{Mahmoodian} gave a proof that appears in the 1979 textbook \cite{BCL} by Behzad, Chartrand, and Lesniak-Foster.  A streamlined version of that proof appeared in 1981 by Thomassen \cite{Thomassen} and was adopted by Chartrand and Lesniak in subsequent revisions of their 1979 textbook, starting with their 1986 revision \cite{CL}.  A very nice proof using systems of distinct representatives appeared in 1979 by Bang and Sharp \cite{BangSharp}. Three years later in 1982, Achutan, Rao, and Ramachandra-Rao \cite{ARR} obtained a proof as a result of some slightly more general work.  In 1987, Bryant \cite{Bryant} gave a proof via a slightly different use of systems of distinct representatives.  Partially ordered sets were employed in a proof by Aigner \cite{Aigner} in 1984 and described by Li in 1986 (his version appeared in 1989 \cite{Li}).

Since 1996 several more proofs of sufficiency have appeared, two in a paper by Griggs and Reid \cite{GriggsReid}, one in 2009 by Brualdi and Kiernan \cite{BrualdiKiernan} using Rado's Theorem from matroid theory, and another inductive proof in 2009 by Holshouser and Reiter \cite{HolshouserReiter}.  In this paper we present a new proof of sufficiency in the vein of the two proofs by Griggs and Reid.  The main idea in these proofs is to utilize a total order on $n$-tuples of integers satisfying Landau's conditions and jump along this total order.  The first proof in \cite{GriggsReid} starts with a known score sequence and jumps down this total order to a given $n$-tuple.  The second proof in \cite{GriggsReid} starts with a given $n$-tuple and jumps up this total order to a known score sequence.  The proof in this paper starts with a given $n$-tuple and jumps down this total order to a known score sequence.  In all of these proofs, each $n$-tuple in the processes is a score sequence if and only if the $n$-tuple reached by a single jump is a score sequence.  We also explain why the maximum number of jumps possible in our algorithm is $\dfrac{n^2 - 1}{8}$, if $n$ is odd, or $\dfrac{n^2 - 2n}{8}$, if $n$ is even.  Our proof makes several uses of the necessity of Landau's conditions.  In Section \ref{sec:5}, we compare our algorithm with the two in \cite{GriggsReid} discussed above.

\section{Definitions and Basic Results}\label{definitions}

A \emph{tournament} $T = (V(T), A(T))$ is an orientation of a complete graph.  $V(T)$ (or simply $V$ if there is no confusion) denotes the vertex set of $T$, and $A(T)$ (of simply $A$ if there is no confusion) denotes the arc set of $T$.  If $|V(T)| = n$, $T$ is called an $n$\emph{-tournament}.  If $(x,y) \in A$, where $x,y \in V$, we say $x$ \emph{dominates} $y$ or $x$ \emph{beats} $y$ and denote this by $x \to y$.  So, $x \to y$ will be used both to indicate the arc $(x,y)$ and to indicate that $x$ dominates $y$.  If $x \in V(T)$, then the \emph{out-set of $x$}, denoted $O(x)$, is the set $\{y | y \in V, x \to y\}$, and the \emph{in-set of $x$}, denoted $I(x)$, is the set $\{z | z \in V, z \to x\}$.  The \emph{score of $x$} (or \emph{out-degree of $x$}) is the integer $|O(x)|$.  A tournament is \emph{strong} if there exists a path from any vertex to any other vertex.  The \emph{score sequence} of $T$ is the $n$-vector of the scores of the vertices arranged in non-decreasing order.  Additional material on tournaments can be found in the 2004 survey by Reid \cite{ReidSurvey}

We now state the aforementioned result by Landau.

\begin{thm}[Landau]
A sequence of integers $S = (s_1, s_2, \dots, s_n)$, where $s_1 \le s_2 \le \dots \le s_n$, is the score sequence of some $n$-tournament if and only if:

(1)  $\displaystyle\sum\limits_{i=1}^k s_i \ge \dbinom{k}{2}$, $1 \le k \le n$, and 

(2)  $\displaystyle\sum\limits_{i=1}^n s_i = \dbinom{n}{2}$.
\end{thm}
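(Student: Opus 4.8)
The plan is to prove both directions of Landau's Theorem, focusing attention on the harder sufficiency direction since the paper itself emphasizes that the proof strategy is to jump down a total order.

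For the necessity direction, I would proceed as follows. Suppose $S = (s_1, \dots, s_n)$ is the score sequence of some $n$-tournament $T$. Condition (2) is immediate: every arc contributes exactly one to the total out-degree, and a tournament on $n$ vertices has $\binom{n}{2}$ arcs, so the scores sum to $\binom{n}{2}$. For condition (1), fix $k$ with $1 \le k \le n$ and consider the $k$ vertices of smallest score, say $x_1, \dots, x_k$. The subtournament induced on these $k$ vertices is itself a tournament, so its internal arcs number exactly $\binom{k}{2}$. Each internal arc contributes to the out-degree of one of these $k$ vertices, and these out-degrees are counted (together with any arcs directed out of the set) by $\sum_{i=1}^k s_i$. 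Hence $\sum_{i=1}^k s_i \ge \binom{k}{2}$. This is the standard and easy half.

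For sufficiency I would follow the strategy flagged in the introduction: define a total order on the set of $n$-tuples satisfying Landau's conditions (1) and (2), identify a maximal or canonical element of this order that is manifestly a score sequence (for instance a sequence realized by a transitive or near-regular tournament), and then show that from any given $n$-tuple satisfying the conditions one can repeatedly jump down the order, each jump replacing the current tuple by another tuple still satisfying Landau's conditions. The crucial lemma to establish is the invariance claim stated in the introduction: a tuple in the process is a score sequence if and only if the tuple reached by a single jump is a score sequence. Granting this equivalence, together with a base case showing the canonical bottom element is realizable by an explicit tournament, an inductive argument along the total order yields that the given tuple is a score sequence. The termination of the jumping process, and hence the $O(n^2)$ bound and the explicit count $\tfrac{n^2-1}{8}$ or $\tfrac{n^2-2n}{8}$ of maximum jumps, would be handled by showing the order is finite and that each jump strictly decreases a suitable potential function.

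The main obstacle will be defining the single jump correctly and proving the score-sequence-invariance under one jump. The jump must be specified so that it always keeps the resulting tuple nondecreasing and preserves both Landau inequalities, which is delicate because decreasing some entry and increasing another can easily violate a partial-sum constraint at an intermediate index; identifying exactly which pair of coordinates to adjust, and by how much, is the heart of the construction. Equally delicate is the reverse implication of the invariance: showing that realizability of the jumped tuple forces realizability of the original, which presumably requires an explicit local modification of a realizing tournament (reversing a single arc or performing a small rotation of arcs) that transforms a realization of one tuple into a realization of its neighbor. I expect the author's proof to leverage the necessity of Landau's conditions repeatedly, exactly as the introduction promises, to certify that each intermediate tuple genuinely stays within the admissible set and to control where the adjusted coordinates can lie.
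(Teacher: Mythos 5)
Your necessity argument is fine and matches the standard one the paper takes for granted. For sufficiency you have correctly identified the skeleton of the paper's argument: jump down the total order from the given tuple to the regular or nearly-regular sequence $R_n$, realize $R_n$ explicitly, and induct backward along the chain of jumps. But the two steps you explicitly defer are exactly where all the content lies, and as written your plan has genuine gaps there. First, you never define the jump. The paper's choice is specific: increase the \emph{last} entry of the initial block of equal minimal values (position $p$) and decrease the \emph{first} entry of the final block of equal maximal values (position $q$). That particular choice is what keeps the tuple nondecreasing and keeps every partial sum at or above $\binom{k}{2}$ (in fact strictly above for $p \le k < q$), and even then one degenerate case ($p+1=q$ with $s_p+1=s_q$) must be ruled out by showing such a tuple is already the nearly-regular sequence (Lemma \ref{thm:nearlyreg}). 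Naming this as ``delicate'' does not discharge it.

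Second, and more seriously, the clean equivalence ``a tuple is a score sequence iff the tuple reached by one jump is'' that you propose to grant is not what the paper proves, and the direction you actually need does not hold in the generality you state. To recover a realization of $S_l$ from a realization $T'$ of $S_{l+1}$, one reverses a path in $T'$ from the vertex of score $s_p+1$ to the vertex of score $s_q-1$; such a path is guaranteed only when $T'$ is \emph{strong}. So the backward induction must carry the stronger hypothesis that every intermediate $S_l$ with $l \ge 1$ is the score sequence of a strong tournament. Establishing that requires the characterization of strong score sequences using only the necessity of Landau's conditions (Lemma \ref{thm:strongLandau}), the arithmetic fact that a tight partial sum cannot occur at a repeated value (Lemma \ref{lem:contra}), and the argument that a non-strong realization can arise only at $S_0$ itself (Lemma \ref{thm:notstrongend}). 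Without this strengthening, your ``reversing a single arc or a small rotation of arcs'' has no reason to exist, and the induction does not close.
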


In the following we will refer to these conditions as condition \emph{(1)} and condition \emph{(2)}.

A result that is often given as a corollary to Landau's Theorem is the following result.

\begin{thm}
A non-decreasing sequence of $n$ integers $S = (s_1, s_2, \dots, s_n)$ is the score sequence of some strong $n$-tournament if and only if $\displaystyle\sum\limits_{i=1}^k s_i > \dbinom{k}{2}$, for all $1 \le k < n$ and $\displaystyle\sum\limits_{i=1}^n s_i = \dbinom{n}{2}$.
\end{thm}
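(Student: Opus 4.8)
The plan is to isolate the single structural fact that links the strict inequalities to strong connectivity, and then read off the theorem from Landau's Theorem. The fact I would establish first, as a lemma, is the following: \emph{for any $n$-tournament $T$ with non-decreasing score sequence $S = (s_1,\dots,s_n)$, $T$ is strong if and only if $\sum_{i=1}^k s_i > \binom{k}{2}$ for every $k$ with $1 \le k < n$.} The remaining deductions sit on top of this equivalence.

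To prove the lemma I would use the standard characterization of strong connectivity: a tournament $T$ fails to be strong exactly when there is a proper nonempty vertex set $W$ with no arc leaving $W$ (equivalently, every vertex of $V \setminus W$ beats every vertex of $W$). The bridge to the score conditions is a counting identity: for any vertex set $W$, the quantity $\sum_{v \in W} |O(v)|$ equals the number of arcs inside $W$, namely $\binom{|W|}{2}$, plus the number of arcs from $W$ to $V \setminus W$. Hence $W$ has no outgoing arc if and only if $\sum_{v \in W}|O(v)| = \binom{|W|}{2}$.

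Next I would attach such a set to an initial segment of the sorted sequence. If $W$ has no outgoing arc and $|W| = k$, then every vertex of $W$ has score at most $k-1$ while every vertex outside $W$ has score at least $k$; consequently the $k$ scores in $W$ are precisely the smallest entries $s_1, \dots, s_k$, so $\sum_{i=1}^k s_i = \binom{k}{2}$ with $1 \le k < n$. Conversely, if $\sum_{i=1}^k s_i = \binom{k}{2}$ for some $k < n$, I would let $W$ be any $k$ vertices realizing the $k$ smallest scores; since the number of internal arcs is $\binom{k}{2}$ regardless of how ties are broken, the counting identity forces no arc to leave $W$, so $T$ is not strong. Together these show that $T$ is non-strong if and only if some partial sum meets the Landau bound with equality for $k < n$, which is the contrapositive of the lemma once one recalls that the necessity half of Theorem 1 already guarantees $\sum_{i=1}^k s_i \ge \binom{k}{2}$ throughout.

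Finally I would derive the theorem directly. For necessity, a strong tournament has a score sequence satisfying Landau's conditions (1) and (2) by the necessity half of Theorem 1, and the lemma upgrades every inequality in (1) with $k < n$ to a strict one. For sufficiency, the strict inequalities trivially imply condition (1), so together with (2) the sufficiency half of Theorem 1 produces a tournament $T$ with score sequence $S$; the lemma then forces $T$ to be strong, since no partial sum equals $\binom{k}{2}$ for $k < n$. The only delicate point in the whole argument is the tie-handling in the lemma --- ensuring that the vertex set attached to a partial-sum equality really is closed --- and this is exactly what the tie-independence of the internal arc count $\binom{k}{2}$ resolves.
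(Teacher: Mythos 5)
Your argument is correct. Its combinatorial core --- the equivalence, for a fixed tournament $T$ with sorted score sequence $S$, between $T$ failing to be strong and the existence of some $k<n$ with $\sum_{i=1}^k s_i = \binom{k}{2}$ --- is exactly the content of the paper's Lemma 3, and your proof of that equivalence is essentially the paper's proof in different clothing: where you use the identity $\sum_{v\in W}|O(v)| = \binom{|W|}{2} + (\text{number of arcs from } W \text{ to } V\setminus W)$ together with the dominated-set characterization of non-strongness, the paper argues via the bottom strong component $C_1$ in one direction and, in the converse direction, by comparing the scores $s_i'$ in the induced subtournament on the $k$ lowest-scoring vertices with the $s_i$ and squeezing $\binom{k}{2} = \sum s_i' \le \sum s_i = \binom{k}{2}$; your counting identity is a slightly cleaner packaging of the same squeeze, and it handles the tie-breaking point you flag for the same reason the paper's version does. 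The genuine difference is architectural: the paper never proves Theorem 2 itself. It notes that any proof of Theorem 2 must invoke the sufficiency half of Landau's Theorem (to manufacture a tournament realizing $S$), which is precisely what the paper is setting out to prove, and so it substitutes Lemma 3 --- the same equivalence restricted to sequences already known to be score sequences --- which requires only the easy necessity half. Your assembly step (Landau sufficiency produces some $T$ realizing $S$; the lemma then forces $T$ to be strong) is the standard and perfectly valid derivation of Theorem 2 as a corollary of Landau's Theorem, but it is only available once sufficiency is in hand; within this paper's development it is Lemma 3, not Theorem 2, that carries the load.
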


The proof of this requires both the necessary and sufficient conditions of Landau's Theorem.  In this paper we use the following version of this theorem, a version that requires only the necessary conditions of Landau's Theorem.  Then we present some other lemmas to be used that are of interest in their own right.

\begin{lem}\label{thm:strongLandau}
A score sequence $S = (s_1, s_2, \dots, s_n)$ is the score sequence of some strong $n$-tournament if and only if $\displaystyle\sum\limits_{i=1}^k s_i > \dbinom{k}{2}$, for all $k, 1 \le k < n$.
\end{lem}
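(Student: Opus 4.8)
The plan is to reduce both directions of the lemma to a single structural dichotomy for an individual tournament: a tournament $T$ with score sequence $S$ fails to be strong precisely when the equality $\sum_{i=1}^k s_i = \binom{k}{2}$ holds for some $k$ with $1 \le k < n$. Since the hypothesis already grants that $S$ is a score sequence, I may fix one tournament $T$ realizing $S$ and argue about it directly; the strict-inequality condition will then turn out to be equivalent to $T$ being strong. The underlying standard fact I would invoke is that a tournament is non-strong if and only if its vertex set admits a partition $V = X \cup Y$ into nonempty parts with every arc between the parts directed from $X$ to $Y$, i.e.\ $X$ dominates $Y$ (equivalently, the condensation of $T$ into strong components is a transitive tournament with at least two components, and one takes $Y$ to be the dominated end).

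For the direction ``non-strong $\Rightarrow$ tight partial sum'', suppose such a dominating partition exists with $|Y| = k$. Every out-arc of a vertex of $Y$ lies inside $Y$, so the sum of the scores of $Y$ equals the number of arcs of the sub-tournament on $Y$, namely $\binom{k}{2}$. The point that needs care is that this tight sum occurs at the \emph{initial} segment of the sorted sequence: since each vertex of $Y$ beats only vertices of $Y$, it has score at most $k-1$, while each vertex of $X$ beats all of $Y$ and hence has score at least $k$. Thus $Y$ consists exactly of the $k$ lowest-scoring vertices, so $\sum_{i=1}^k s_i = \binom{k}{2}$, and $Y \ne V$ forces $k < n$.

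Conversely, for ``tight partial sum $\Rightarrow$ non-strong'', suppose $\sum_{i=1}^k s_i = \binom{k}{2}$ for some $k$ with $1 \le k < n$. Let $Y$ be a set of $k$ vertices realizing the $k$ smallest scores $s_1, \dots, s_k$. The sub-tournament on $Y$ already accounts for $\binom{k}{2}$ out-arcs, which by hypothesis is the entire out-degree sum of $Y$; hence no arc leaves $Y$, so $V \setminus Y$ dominates $Y$ and $T$ is not strong. Combining these two implications with the necessity half of Landau's Theorem (which the excerpt permits me to invoke, giving $\sum_{i=1}^k s_i \ge \binom{k}{2}$ for every realization) yields: $T$ is strong if and only if no partial sum is tight, i.e.\ if and only if $\sum_{i=1}^k s_i > \binom{k}{2}$ for all $1 \le k < n$. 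Applying this equivalence to a strong realization gives the necessity of the strict inequalities, and applying it to an arbitrary realization of $S$ produces a strong realization whenever the strict inequalities hold, settling both directions.

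I expect the main obstacle to be the bookkeeping in the ``non-strong $\Rightarrow$ tight sum'' step, specifically verifying that the dominated set $Y$ coincides with an initial segment of the sorted score sequence, so that the equality lands on a genuine partial sum $\sum_{i=1}^k s_i$ rather than on the scores of an arbitrary $k$-subset; the score-gap comparison between $X$ and $Y$ is exactly what closes this gap.
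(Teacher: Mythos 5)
Your proposal is correct and follows essentially the same route as the paper: both prove the contrapositive dichotomy that a realization of $S$ is non-strong exactly when some partial sum $\sum_{i=1}^k s_i$ equals $\binom{k}{2}$, using the score-gap argument ($Y$-vertices score at most $k-1$, $X$-vertices at least $k$) to place the dominated set at an initial segment, and the arc-counting argument on the induced sub-tournament for the converse. No gaps.
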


\begin{proof}
To prove this, we will instead prove the contrapositive.  That is, we will show that a score sequence $(s_1, s_2, \dots, s_n)$ is the score sequence of some $n$-tournament which is not strong if and only if there exists some $k$, $1 \le k < n$ such that $\displaystyle\sum\limits_{i=1}^k s_i = \dbinom{k}{2}$.

Suppose $S = (s_1, s_2, \dots, s_n)$ is the score sequence of some $n$-tournament, $T$, that is not strong.  Then $T$ contains at least two strong components.  Let $C_1$  be the strong component of $T$ such that all of the vertices in $T$ that are not in $C_1$ dominate all of the vertices in $C_1$.  Suppose also that $C_1$ contains exactly $k \ge 1$ vertices of $T$.  We first show that the vertices of $C_1$ have scores $s_1, s_2, \dots, s_k$.  Suppose on the contrary, there exists $i$ and $j, 1 \le i \le k < j$, such that $s_i$ is the score of a vertex $v_i$ not in $C_1$, and $s_j$ is the score of a vertex $v_j$ in $C_1$.  Since $v_j$ is in $C_1$ and $v_j$ beats no vertex outside of $C_1$, $s_j$ must be strictly less than $k$.  As $v_i$ is not in $C_1$, $v_i$ beats all of the vertices of $C_1$, so that $s_i \ge k$.  Yet, $i < j$ implies $s_i \le s_j$, so $k \le s_i \le s_j < k$, a contradiction.  Thus, the vertices in $C_1$ must have scores from $\{s_1, s_2, \dots, s_k\}$.  Notice that the vertices of $C_1$ induce a tournament of order $k$, which has score sequence $(s_1, s_2, \dots, s_k)$.  Since a tournament of order $k$ must have exactly $\dbinom{k}{2}$ arcs, the sum of these scores must be $\dbinom{k}{2}$.  That is, $\displaystyle\sum\limits_{i=1}^k s_i = \dbinom{k}{2}$.

Now, suppose $S = (s_1, s_2, \dots, s_n)$ is the score sequence of some $n$-tournament $T$, and there exists $k, 1 \le k < n$, such that $\displaystyle\sum\limits_{i=1}^k s_i = \dbinom{k}{2}$.  Also suppose that vertex $v_i$ in $T$ has score $s_i$, $1 \le i \le n$.  Let $V' = \{v_1, v_2, \dots, v_k\}$ and consider the tournament induced by the vertices of $V'$, $T[V']$.  Let $s_i'$ be the score of $v_i$ in $T[V']$, $1 \le i \le k$.  Notice that $s_i' \le s_i$, for all $i, 1 \le i \le k$.  As $T[V']$ is a tournament with exactly $\dbinom{k}{2}$ arcs, we need $\displaystyle\sum\limits_{i=1}^k s_i' = \dbinom{k}{2}$.  Yet, $\dbinom{k}{2} = \displaystyle\sum\limits_{i=1}^k s_i' \le \displaystyle\sum\limits_{i=1}^k s_i = \dbinom{k}{2}$.  Thus, $\displaystyle\sum\limits_{i=1}^k s_i' = \displaystyle\sum\limits_{i=1}^k s_i$, and so, $s_i' = s_i$ for all $i, 1 \le i \le k$.  Hence, each vertex of $T$ in $V'$ only dominates other vertices of $T$ also in $V'$.  So, each vertex outside of $V'$ must dominate all of the vertices within $V'$.  That is, $T$ is not strong.
\end{proof}

\begin{lem}\label{lem:contra}
Let $S = (s_1, s_2, \dots, s_n)$ be a non-decreasing sequence of integers satisfying \emph{(1)}.  For no $k, 1 \le k < n$ is $\displaystyle\sum\limits_{i=1}^k s_i = \dbinom{k}{2}$ and $s_k = s_{k+1}$.  
\end{lem}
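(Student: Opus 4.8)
The plan is to argue by contradiction. Suppose some $k$ with $1 \le k < n$ satisfies both $\sum_{i=1}^k s_i = \binom{k}{2}$ and $s_k = s_{k+1}$. The strategy is to trap the common value $s_k = s_{k+1}$ between two bounds obtained by evaluating condition \emph{(1)} at the neighboring indices $k-1$ and $k+1$, and then to show that these bounds are incompatible.

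First I would bound $s_{k+1}$ from below. Writing $s_{k+1} = \sum_{i=1}^{k+1} s_i - \sum_{i=1}^k s_i$ and applying condition \emph{(1)} at index $k+1$ to the first sum while using the hypothesized equality $\sum_{i=1}^k s_i = \binom{k}{2}$ for the second, I obtain $s_{k+1} \ge \binom{k+1}{2} - \binom{k}{2} = k$, where the final step is just Pascal's identity $\binom{k+1}{2} = \binom{k}{2} + k$.

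Next I would bound $s_k$ from above, in a symmetric fashion. Writing $s_k = \sum_{i=1}^k s_i - \sum_{i=1}^{k-1} s_i$, and now using the hypothesized equality for the first sum and condition \emph{(1)} at index $k-1$ for the second, I obtain $s_k \le \binom{k}{2} - \binom{k-1}{2} = k-1$. Combining the two bounds with the hypothesis $s_k = s_{k+1}$ gives $k \le s_k \le k-1$, which is the desired contradiction.

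The one step I would check carefully is the boundary case $k = 1$, where the upper-bound argument references $\sum_{i=1}^{0} s_i$ and condition \emph{(1)} at index $0$. With the usual conventions that the empty sum is $0$ and $\binom{0}{2} = 0$, the instance $\sum_{i=1}^{0} s_i \ge \binom{0}{2}$ reads $0 \ge 0$ and the bound $s_1 \le 0$ still follows, so the uniform argument goes through; alternatively, $k = 1$ can be handled directly, since there $s_1 = 0$ forces $s_2 = s_1 = 0$, contradicting $s_2 \ge 1$, which is what condition \emph{(1)} at index $2$ yields together with the equality at index $1$. Beyond this bookkeeping I do not expect any genuine obstacle.
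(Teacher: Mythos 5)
Your proof is correct and follows essentially the same route as the paper's: both derive the contradiction from condition \emph{(1)} at the neighboring indices $k-1$ and $k+1$ combined with the assumed equality at $k$. Your version is in fact a slight streamlining, since the paper first passes to the smallest offending $k$ in order to claim the strict inequality $\sum_{i=1}^{k-1} s_i > \binom{k-1}{2}$, whereas your sandwich $k \le s_{k+1} = s_k \le k-1$ needs only the non-strict instances of \emph{(1)} (with the $k=1$ boundary case handled exactly as you note).
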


\begin{proof}
Suppose on the contrary that such an integer exists, and let the smallest such be denoted $k$.  If $k = 1$, then $s_1 = \dbinom{1}{2} = 0$ and $s_2 = s_1 = 0$.  Clearly, $s_1 + s_2 = 0 < \dbinom{2}{2} = 1$, contrary to the fact that $S$ satisfies \emph{(1)}.

Now suppose $1 < k < n$.  Since $k$ is the smallest integer where $\displaystyle\sum\limits_{i=1}^k s_i = \dbinom{k}{2}$, we must have $\displaystyle\sum\limits_{i=1}^{k-1} s_i > \dbinom{k-1}{2}$.  So, $\dbinom{k-1}{2} + (k - 1) = \dbinom{k}{2} = \displaystyle\sum\limits_{i=1}^k s_i = \left(\displaystyle\sum\limits_{i=1}^{k-1} s_i\right) + s_k > \dbinom{k-1}{2} + s_k$.  This implies $s_k < k - 1$, and, as $s_{k+1} = s_k$, $s_{k+1} < k - 1$.  So, $\displaystyle\sum\limits_{i=1}^{k+1} s_i = \left(\displaystyle\sum\limits_{i=1}^k s_i\right) + s_k = \dbinom{k}{2} + s_k < \dbinom{k}{2} + k - 1 < \dbinom{k+1}{2}$, again a contradiction to \emph{(1)}.

In any case, we arrive at a contradiction.  Thus, no such $k$, exists.
\end{proof}

If $n$ is an odd integer, then any $n$-tournament in which all of the scores are as nearly equal as possible is called a \emph{regular $n$-tournament}.  If $n$ is an even integer, then any $n$-tournament in which all of the scores are as nearly equal as possible is called a \emph{nearly-regular $n$-tournament}.

It is easy to see that if $n$ is odd, then all of the scores in a regular $n$-tournament must be $\dfrac{n-1}{2}$, and if $n$ is even, then half of the scores in a nearly-regular $n$-tournament are $\dfrac{n}{2}$ and the other half of the scores are $\dfrac{n-2}{2}$.  Consequently, we see that the score sequence of any regular $n$-tournament is $\left(\dfrac{n-1}{2}, \dfrac{n-1}{2}, \dots, \dfrac{n-1}{2}\right)$, and the score sequence of any nearly-regular $n$-tournament is $\left(\dfrac{n-2}{2}, \dfrac{n-2}{2}, \dots, \dfrac{n-2}{2}, \dfrac{n}{2}, \dfrac{n}{2}, \dots, \dfrac{n}{2}\right)$.  We will refer to these as the \emph{regular score sequence} and \emph{nearly-regular score sequence}, respectively.  

In the following discussion we let $R_n = (r_1, r_2, \dots, r_n)$ denote the score sequence of a regular $n$-tournament if $n$ is odd, and we also let $R_n$ represent the score sequence of a nearly-regular $n$-tournament if $n$ is even.

When $n$ is odd, a regular $n$-tournament is easily constructed by taking its vertices to be the integers $0, 1, \dots, n - 1$, choosing a set $X$ of $\dfrac{n-1}{2}$ terms from the set $\{1, 2, \dots, n - 1\}$ so that the sum of any two elements in $X$ is different from 0 modulo $n$, and declaring that the out-set of each vertex $i$ is the set of the $\dfrac{n-1}{2}$ vertices $\{i + x | x\in X\}$, where addition is modulo $n$.  For example, take $X = \{1, 2, \dots, \dfrac{n-1}{2}\}$.  When $n$ is even, a nearly-regular $n$-tournament can be obtained by deleting a vertex of a regular $(n+1)$-tournament.

\begin{lem}\label{thm:nearlyreg}
Let $S = (a, a, \dots, a, a+1, a+1, \dots, a+1)$ be a sequence of $n$ integers that satisfies condition \emph{(2)}, where the multiplicy of $a$ is $m, 1 < m < n$.  Then $S$ must be the nearly-regular score sequence.
\end{lem}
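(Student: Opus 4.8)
The plan is to prove this by a direct computation that uses condition \emph{(2)} together with the two-sided bound $1 < m < n$. First I would record the sum of the entries of $S$: since $S$ consists of $m$ copies of $a$ and $n - m$ copies of $a+1$, its sum is $ma + (n-m)(a+1) = na + (n - m)$. Condition \emph{(2)} then forces $na + (n-m) = \binom{n}{2}$, and solving for $m$ gives the single identity $2m = n(2a + 3 - n)$ that drives the whole argument.

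Next I would squeeze $a$ between two inequalities coming from the bounds on $m$. Dividing $2m = n(2a+3-n)$ by the positive integer $n$, the inequality $m < n$ yields $2a + 3 - n < 2$, and since $a$ and $n$ are integers this forces $a \le \frac{n-2}{2}$. Likewise $m > 1$ (in particular $m > 0$) yields $2a + 3 - n > 0$, whence $2a \ge n - 2$, i.e.\ $a \ge \frac{n-2}{2}$. The two inequalities together pin down $a = \frac{n-2}{2}$.

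The one genuinely delicate point is integrality: because $a$ is an integer, the equality $a = \frac{n-2}{2}$ can hold only when $n$ is even, so the hypotheses silently force the parity of $n$ rather than assume it. Substituting $a = \frac{n-2}{2}$ back into $2m = n(2a+3-n)$ collapses the right-hand side to $n$, giving $m = \frac{n}{2}$. Hence $S$ consists of exactly $\frac{n}{2}$ copies of $\frac{n-2}{2}$ followed by $\frac{n}{2}$ copies of $\frac{n}{2}$, which is precisely the nearly-regular score sequence, completing the proof.

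I do not expect a substantial obstacle, as the argument amounts to a linear equation pinched between two inequalities; the main thing to watch is to derive the parity of $n$ as a consequence of $a = \frac{n-2}{2}$ instead of assuming $n$ even at the outset. It is also worth noting that the full strength of the hypothesis is not needed: the weaker $0 < m < n$, which merely guarantees that both values $a$ and $a+1$ actually appear in $S$, already supplies both inequalities.
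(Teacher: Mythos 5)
Your proof is correct and takes essentially the same approach as the paper: both derive the linear identity $na + (n-m) = \binom{n}{2}$ from condition \emph{(2)} and then exploit integrality together with the bounds on $m$. The only cosmetic difference is that the paper solves for $a = \frac{n-3}{2} + \frac{m}{n}$ and reads off the parity of $n$ and the value $m = \frac{n}{2}$ from the integrality of $a$, whereas you solve for $2m = n(2a+3-n)$ and squeeze $a = \frac{n-2}{2}$ between two inequalities first; your closing observation that $0 < m < n$ already suffices is also accurate.
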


\begin{proof}
Since $\dbinom{n}{2} = am + (n - m)(a+1)$, or $am + na + n - ma - m = \dfrac{n(n - 1)}{2}$, we see that $a = \dfrac{n-3}{2} + \dfrac{m}{n}$.  For $n$ odd, $\dfrac{n-3}{2} + \dfrac{m}{n}$ is not an integer.  So, $n$ is not odd, as $a$ is an integer.  Thus, $n$ must be even, and $a = \dfrac{n-2}{2} - \dfrac{1}{2} + \dfrac{m}{n}$.  Since $a$ is an integer we must have $\dfrac{m}{n} = \dfrac{1}{2}$, that is, $m = \dfrac{n}{2}$ and $a = \dfrac{n-2}{2}$.  So, $S = \left(\dfrac{n-2}{2}, \dfrac{n-2}{2}, \dots, \dfrac{n-2}{2}, \dfrac{n}{2}, \dfrac{n}{2}, \dots, \dfrac{n}{2}\right)$, the nearly-regular score sequence.
\end{proof}

An easy application of Lemma \ref{thm:strongLandau} shows that any regular $n$-tournament and any nearly-regular $n$-tournament is strong.

We now recall the well known 1-norm metric (or ``Manhattan'' metric) on $n$-tuples of real numbers.  Given two sequences of real numbers $S = (s_1, s_2, \dots, s_n)$ and $S' = (s_1', s_2', \dots, s_n')$, the distance between $S$ and $S'$, denoted by $d(S, S')$, is $\displaystyle\sum\limits_{i=1}^n |s_i - s_i'|$.  Another tool that we will use is the following.

\begin{lem}\label{thm:even}
Suppose that $S = (s_1, s_2, \dots, s_n)$ and $S' = (s_1', s_2', \dots, s_n')$ are two sequences of integers such that $\displaystyle\sum\limits_{i=1}^n s_i = \displaystyle\sum\limits_{i=1}^n s_i'$.  Then $d(S, S')$ is even.
\end{lem}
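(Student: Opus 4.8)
The plan is to reduce the claim to a parity statement about each individual term and then sum. The key observation is that for any integer $a$, the value $|a|$ has the same parity as $a$ itself: indeed $|a|$ equals either $a$ or $-a$, and $a - (-a) = 2a$ is even, so $|a| \equiv a \pmod 2$ in both cases.

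First I would apply this termwise. For each $i$ with $1 \le i \le n$, the difference $s_i - s_i'$ is an integer, so $|s_i - s_i'| \equiv s_i - s_i' \pmod 2$. Summing these congruences over $i$ and using that a sum of integers is congruent modulo $2$ to the sum of the individual terms, I obtain $d(S, S') = \sum_{i=1}^n |s_i - s_i'| \equiv \sum_{i=1}^n (s_i - s_i') \pmod 2$.

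Then I would invoke the hypothesis. The right-hand side splits as $\sum_{i=1}^n s_i - \sum_{i=1}^n s_i'$, which equals $0$ since the two sequences are assumed to have the same sum. Hence $d(S, S') \equiv 0 \pmod 2$, that is, $d(S, S')$ is even. There is essentially no obstacle here; the only point requiring a moment's care is the termwise parity claim $|a| \equiv a \pmod 2$, but as noted above this follows at once from $|a| \in \{a, -a\}$ together with $2 \mid 2a$.
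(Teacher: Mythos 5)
Your proof is correct, but it takes a genuinely different route from the paper's. You argue termwise modulo $2$: since $|a| \equiv a \pmod 2$ for every integer $a$, the distance is congruent to $\sum_{i=1}^n (s_i - s_i') = 0$ modulo $2$. The paper instead partitions the index set into $L = \{i \mid s_i' \le s_i\}$ and $G = \{i \mid s_i' > s_i\}$ and shows, using the equal-sums hypothesis, that $\sum_{i \in L} |s_i - s_i'| = \sum_{i \in G} |s_i - s_i'|$, whence $d(S,S') = 2\sum_{i \in L} |s_i - s_i'|$. Your argument is shorter and avoids the paper's (strictly unnecessary) digression about $L$ and $G$ both being non-empty; it establishes parity and nothing more. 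The paper's approach buys an explicit closed form, $d(S,S') = 2\sum_{i \in L}|s_i - s_i'|$, which is not incidental: it is quoted and reused verbatim in the proof of Theorem \ref{thm:distance} to compute $d(R_n,S)$ as $2\sum_{i=1}^k (r_i - s_i)$. So while your proof fully suffices for the lemma as stated, the paper's version is doing double duty, and replacing it with the mod-$2$ argument would require re-deriving the $L$/$G$ identity later.
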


\begin{proof}
If $S = S'$, then clearly $d(S, S') = 0$.  So, assume $S \neq S'$.  Let $L = \{i | s_i' \le s_i\}$ and $G = \{i | s_i' > s_i\}$.  Clearly, $L$ and $G$ are disjoint and are subsets of $\{1,2, \dots, n\} = [n]$ such that $L \cup G = [n]$.  To show that each of $L$ and $G$ is non-empty, suppose first that $L = \emptyset$.  That is, $s_i' > s_i$ for all $i, 1 \le i \le n$.  Then $\displaystyle\sum\limits_{i=1}^n s_i' > \displaystyle\sum\limits_{i=1}^n s_i$, a contradiction.  On the other hand, suppose $G = \emptyset$, that is, $s_i' \le s_i$ for all $i, 1 \le i \le n$.  Since $S' \neq S$, there exists some $k, 1 \le k \le n$, such that $s_k' < s_k$.  Again, $\displaystyle\sum\limits_{i=1}^n s_i' < \displaystyle\sum\limits_{i=1}^n s_i$, a contradiction.  Thus, $L$ and $G$ partition the set $[n]$.

Now, $\displaystyle\sum\limits_{i \in L} |s_i - s_i'| =
\displaystyle\sum\limits_{i \in L} (s_i - s_i') =
\displaystyle\sum\limits_{i \in L} (s_i - s_i') + \displaystyle\sum\limits_{i \in G} (s_i - s_i') - \displaystyle\sum\limits_{i \in G} (s_i - s_i') = \displaystyle\sum\limits_{i = 1}^n (s_i - s_i') -  \displaystyle\sum\limits_{i \in G} (s_i - s_i') = \displaystyle\sum\limits_{i = 1}^n s_i - \displaystyle\sum\limits_{i =1}^n s_i' - \displaystyle\sum\limits_{i \in G} (s_i - s_i') = 
0 + \displaystyle\sum\limits_{i \in G} (s_i' - s_i) = 
\displaystyle\sum\limits_{i \in G} |s_i' - s_i| = \displaystyle\sum\limits_{i \in G} |s_i - s_i'|$.  Consequently, $d(S, S') = \displaystyle\sum\limits_{i =1}^n |s_i - s_i'| = \displaystyle\sum\limits_{i \in L} |s_i - s_i'| + \displaystyle\sum\limits_{i \in G} |s_i - s_i'| = 2\displaystyle\sum\limits_{i \in L} |s_i - s_i'|$.  Since $S$ and $S'$ are integer sequences, $\displaystyle\sum\limits_{i \in L} |s_i - s_i'|$ is an integer.  Thus, $d(S, S')$ is even.
\end{proof}

\section{Proof of Sufficiency}\label{proof}

Define $\bS_n = \{(s_1, s_2, \dots, s_n) | s_i \in \bZ, 0 \le s_1 \le s_2 \le \dots \le s_n, \displaystyle\sum\limits_{i=1}^k s_i \ge \dbinom{k}{2}, 1 \le k \le n, \text{and} \displaystyle\sum\limits_{i=1}^n s_i = \dbinom{n}{2}\}$.  So, $\bS_n$ is the set of non-decreasing integral $n$-tuples that satisfy Landau's conditions $\emph{(1)}$ and $\emph{(2)}$.  Define the order $\preceq$ on sequences in $\bS_n$ as follows:  for $A_n = (a_1, a_2, \dots, a_n)$ and $B_n = (b_1, b_2, \dots, b_n)$ in $\bS_n$, $A_n \preceq B_n$ if and only if either $A_n = B_n$, or $a_n < b_n$, or for some $i, 1 \le i < n$, $a_n = b_n, a_{n-1} = b_{n-1}, \dots, a_{i+1} = b_{i+1}, a_i < b_i$.  Then $\preceq$ is a total order on $\bS_n$ with maximum element $Tr_n = (0,1,2, \dots, n - 1)$, the score sequence of the transitive $n$-tournament (i.e., the $n$-tournament with no directed cycles), and with minimum element $R_n$, the score sequence of either a regular $n$-tournament, if $n$ is odd, or a nearly-regular score sequence, if $n$ is even.

We claim that the following algorithm will transform a given sequence in $\bS_n$ into the sequence $R_n$ via jumps down $(\bS_n, \preceq)$ in such a way that each sequence in the process is a score sequence if and only if the sequence reached by a single jump is a score sequence.  These jumps usually do not involve two sequences such that one covers the other in $(\bS_n, \preceq)$.  And, we will show that the first new sequence obtained is a strong score sequence.

\noindent\textbf{Algorithm}

\noindent\textbf{(i)}  Begin with $S_0 = (s_1, s_2, \dots, s_n) \neq R_n$ where $S_0 \in \bS_n$.  

\noindent\textbf{(ii)}  For $S_l = (s_1, s_2, \dots, s_n)$, $l \ge 0$, find indicies $p$ and $q$, $p < q$, where $s_1 = s_2 = \dots = s_p < s_{p+1}$ and $s_{q-1} < s_q = s_{q+1} = \dots = s_n$, and replace $s_p$ with $s_p + 1$ and $s_q$ with $s_q - 1$.  Note that $p$ may be 1 and $q$ may be $n$.

\noindent\textbf{(iii)}  Set this new $n$-tuple as $S_{l+1}$ and relabel the scores as $s_1, s_2, \dots, s_n$.

\noindent\textbf{(iv)}  If $S_{l+1} = R_n$, the regular or nearly-regular score sequence, then stop.  Otherwise, return to (ii).

If a non-decreasing integer sequence satisfies \emph{(1)} and \emph{(2)} and is neither the regular nor nearly-regular score sequence, then it is clear that such a $p$ and $q$, $p < q$, as in part (ii) exist.

Consider the following example where the appropriate $p$th and $q$th positions are underlined, and $A \succeq B$ means $B \preceq A$. 
\vspace{-12pt}
\begin{center}
$\begin{array}{l l l l l l l l}
& S_0 &= &(1,\underline{1},2,3,4,5,\underline{6},6) &
\succeq & S_1 &=& (\underline{1},2,2,3,4,5,5,\underline{6}) \\ 
\succeq &S_2 &= &(2,2,\underline{2},3,4,\underline{5},5,5) &
\succeq &S_3 &= &(2,\underline{2},3,3,4,4,\underline{5},5)\\
\succeq &S_4 &= &(\underline{2},3,3,3,4,4,4,\underline{5}) &
\succeq &S_5& = &(3,3,3,3,4,4,4,4).
\end{array}$
\end{center}
\vspace{-12pt}
Note that the jumps are not necessarily between sequences so that one covers the other in $(\bS_8, \preceq)$.  For example, $S_1 \succeq (2,2,2,2,4,5,5,6) \succeq S_2$, so $S_1$ does not cover $S_2$.

\begin{lem}\label{thm:12Landau}
In the algorithm above, for all $l \ge 0$, if $S_l \neq R_n$ is in $\bS_n$, then $S_{l+1}$ is in $\bS_n$.  Thus, every sequence obtained by the algorithm is in $\bS_n$.
\end{lem}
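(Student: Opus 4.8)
The plan is to verify directly that the three defining requirements of membership in $\bS_n$ are inherited by $S_{l+1}$ from $S_l$: that $S_{l+1}$ is a non-decreasing sequence of non-negative integers, that it satisfies \emph{(1)}, and that it satisfies \emph{(2)}. Write $S_l = (s_1, \dots, s_n)$ with indices $p < q$ as in step (ii), so that $S_{l+1}$ is obtained by replacing $s_p$ by $s_p + 1$ and $s_q$ by $s_q - 1$ and leaving all other entries fixed. Condition \emph{(2)} is immediate, since the net change to the total is $(+1) + (-1) = 0$, whence $\sum_{i=1}^n s_i' = \sum_{i=1}^n s_i = \binom{n}{2}$. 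The entries remain integers, and non-negativity will come for free once I observe below that $s_q \ge 1$.

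For condition \emph{(1)} I would compare the partial sums $T_k = \sum_{i=1}^k s_i$ of $S_l$ with the partial sums $T_k'$ of $S_{l+1}$. Because the only changes are a $+1$ at position $p$ and a $-1$ at the strictly later position $q$, there are just three ranges to consider: for $k < p$ nothing has been added or removed, so $T_k' = T_k \ge \binom{k}{2}$; for $p \le k < q$ only the $+1$ is counted, so $T_k' = T_k + 1 > \binom{k}{2}$; and for $k \ge q$ both changes are counted and cancel, so $T_k' = T_k \ge \binom{k}{2}$. Thus the partial sums never drop, and \emph{(1)} is preserved with no case analysis on the actual values -- this is precisely the point of incrementing an earlier coordinate and decrementing a later one.

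The substantive step is to confirm that $S_{l+1}$ is still non-decreasing. Since $s_1 = \cdots = s_p < s_{p+1}$, the new entry $s_p + 1$ satisfies $s_{p-1} = s_p < s_p + 1 \le s_{p+1}$; and since $s_{q-1} < s_q = \cdots = s_n$, the new entry $s_q - 1$ satisfies $s_{q-1} \le s_q - 1 < s_q = s_{q+1}$. Hence every adjacent comparison is in order except possibly the one between positions $p$ and $q$ when these are adjacent, i.e.\ when $q = p+1$; there one needs $s_p + 1 \le s_q - 1$, equivalently $s_q \ge s_p + 2$. I expect this to be the main obstacle, and I would dispose of it as follows. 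If $q = p+1$, then every coordinate equals either $s_p$ or $s_q$, so $S_l$ is two-valued; if moreover $s_q = s_p + 1$, then $S_l = (a, \dots, a, a+1, \dots, a+1)$ with $a = s_p$ occurring $m = p$ times, where $1 \le m \le n-1$. If $1 < m < n$, Lemma \ref{thm:nearlyreg} forces $S_l = R_n$, contradicting $S_l \ne R_n$; the remaining boundary case $m = 1$ is excluded because \emph{(2)} would then force $a = \frac{(n-1)(n-2)}{2n}$, which is a non-negative integer only for $n = 2$, where the sequence is $(0,1) = R_2$, again a contradiction. Therefore $s_q \ge s_p + 2$ whenever $q = p+1$, and $S_{l+1}$ is non-decreasing. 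Finally $s_q = s_n \ge 1$, since otherwise all entries would be $0$ and $\sum_{i=1}^n s_i = 0 \ne \binom{n}{2}$ for $n \ge 2$; this gives $s_q - 1 \ge 0$ and hence non-negativity of every entry.

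Combining these facts yields $S_{l+1} \in \bS_n$. The closing assertion that every sequence produced by the algorithm lies in $\bS_n$ then follows by induction on $l$, with base case the hypothesis $S_0 \in \bS_n$ from step (i).
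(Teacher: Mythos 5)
Your proof is correct and follows essentially the same route as the paper's: the three-range partial-sum check for conditions \emph{(1)} and \emph{(2)}, and the reduction of the only problematic monotonicity case ($q = p+1$ with $s_q = s_p + 1$) to Lemma~\ref{thm:nearlyreg}. You are in fact slightly more careful than the paper, which invokes Lemma~\ref{thm:nearlyreg} without remarking that its hypothesis $1 < m < n$ excludes the boundary case $p = 1$, a case you dispose of separately (and you also verify non-negativity of $s_q - 1$ explicitly).
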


\begin{proof}
Let $S_{l+1} = (s_1', s_2', \dots, s_n')$.  Consider $\displaystyle\sum\limits_{i=1}^k s_i'$.  If $1 \le k < p$, then $\displaystyle\sum\limits_{i=1}^k s_i' = \displaystyle\sum\limits_{i=1}^k s_i \ge \dbinom{k}{2}$, since $S_l$ satisfies condition \emph{(1)}.  If $p \le k < q$, then $\displaystyle\sum\limits_{i=1}^k s_i' = \left(\displaystyle\sum\limits_{i=1, i \neq p}^k s_i'\right) + s_p' = \left(\displaystyle\sum\limits_{i=1, i \neq p}^k s_i\right) + s_p + 1 = \left(\displaystyle\sum\limits_{i=1}^k s_i\right) + 1 > \displaystyle\sum\limits_{i=1}^k s_i \ge \dbinom{k}{2}$, since $S_l$ satisfies condition \emph{(1)}.  Similarly, if $q \le k < n$, then $\displaystyle\sum\limits_{i=1}^k s_i' = \displaystyle\sum\limits_{i=1}^k s_i + 1 - 1 = \displaystyle\sum\limits_{i=1}^k s_i \ge \dbinom{k}{2}$, since $S_l$ satisfies condition \emph{(1)}.  Lastly, $\displaystyle\sum\limits_{i=1}^n s_i' = \displaystyle\sum\limits_{i=1}^n s_i + 1 - 1 = \displaystyle\sum\limits_{i=1}^n s_i = \dbinom{n}{2}$, since $S_l$ satisfies \emph{(2)}.  So, $S_{l+1}$ satisfies both condition \emph{(1)} and condition \emph{(2)}.

To show $s_1' \le s_2' \le \dots \le s_n'$, first consider the case where $p + 1 < q$.  By the definition of $S$, $s_1 = \dots = s_p < s_{p+1} \le \dots \le s_{q-1} < s_q  = \dots = s_n$.  So, $s_1 = \dots = s_{p-1} < s_p + 1 \le s_{p+1} \dots \le s_{q-1} \le s_q - 1 < s_{q+1} = \dots = s_n$.  This implies, $s_1' = \dots = s_{p-1}' < s_p' \le s_{p+1}' \dots \le s_{q-1}' \le s_q' < s_{q+1}' = \dots = s_n'$.  That is, $s_1' \le s_2' \le \dots \le s_n'$, as desired.  

Now, suppose $p +1 = q$.  That is, $s_1 = \dots = s_p < s_{p+1} = \dots = s_n$.  If $s_p + 1 < s_q$, then $s_1 = \dots = s_{p-1} < s_p+1 \le s_q - 1 < s_{q+1} = \dots = s_n$.  So, $s_1' = \dots = s_{p-1}' < s_p' \le s_q' < s_{q+1}' = \dots = s_n'$, and $s_1' \le s_2' \le \dots \le s_n'$, as desired.  

So, the only remaining possible case is if $p+1 = q$ and $s_p + 1 = s_q$.  This means that the terms of $S_l$ consist of exactly two distinct integers that differ by 1.  By Lemma \ref{thm:nearlyreg}, this implies that $S_l$ is the nearly-regular score sequence, a contradiction.  Thus, $S_{l+1}$ is non-decreasing.  Since $S_0$ satisfies \emph{(1)} and \emph{(2)} and is non-decreasing, the result follows by induction.
\end{proof}

Lemma \ref{thm:12Landau} shows that the algorithm preserves the conditions set out in Landau's Theorem.  Next, we show that the algorithm terminates with the regular or nearly-regular sequence $R_n$.

\begin{lem}\label{thm:alg1}
The algorithm produces a sequence of integral $n$-tuples beginning at $S_0$ and ending at $R_n$.
\end{lem}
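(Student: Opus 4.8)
The plan is to show that each iteration strictly descends the total order, $S_{l+1} \prec S_l$, and then to invoke finiteness. By Lemma \ref{thm:12Landau} every iterate $S_l$ lies in $\bS_n$, and the remark following the algorithm guarantees that the indices $p < q$ of step (ii) exist whenever $S_l \ne R_n$; hence a jump can be performed, and a successor $S_{l+1}$ produced, at every stage with $S_l \ne R_n$. Granting the single-step descent, the iterates form a strictly decreasing chain $S_0 \succ S_1 \succ S_2 \succ \cdots$ in the finite totally ordered set $(\bS_n, \preceq)$; such a chain must terminate, so the algorithm halts, and by step (iv) it halts only when its current sequence equals the minimum element $R_n$. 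Thus essentially all of the content lies in establishing the single-step descent.

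To prove $S_{l+1} \prec S_l$, write $v = s_q$ for the maximum value of $S_l$ and recall that step (ii) replaces one copy of $v$ by $v - 1$ and one copy of the minimum $s_p$ by $s_p + 1$. I would first record the inequality $v \ge s_p + 2$: otherwise $v = s_p + 1$, so every term of $S_l$ lies in $\{s_p, s_p + 1\}$ with both values occurring (since $s_p < s_{p+1} = v$), and then Lemma \ref{thm:nearlyreg} forces $S_l$ to be the nearly-regular sequence $R_n$, contrary to hypothesis, exactly as in the proof of Lemma \ref{thm:12Landau}. Consequently the altered values satisfy $s_p + 1 \le v - 1 < v$, so neither of them becomes a new maximum, and I would then compare $S_l$ with the re-sorted $S_{l+1}$ coordinate by coordinate from the top, distinguishing $q = n$ from $q < n$.

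When $q = n$ the value $v$ occupies only position $n$; after the jump that entry becomes $v - 1$, and since $s_{n-1} < s_n = v$ gives $s_{n-1} \le v - 1$, the largest entry of $S_{l+1}$ is $v - 1 < v$, so $S_{l+1} \prec S_l$ by the clause $a_n < b_n$. When $q < n$ the value $v$ occupies positions $q, q+1, \dots, n$; decrementing one copy lowers its multiplicity by exactly one, so $v$ still fills positions $q+1, \dots, n$, and $S_l, S_{l+1}$ agree there. In coordinate $q$, however, $S_l$ has $v$ while the largest remaining value of $S_{l+1}$ is $v - 1$, because $v - 1$ dominates both the incremented $s_p + 1$ and every unchanged middle term $s_{q-1} \le v - 1$. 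Hence $S_l$ and $S_{l+1}$ first differ at position $q$, where $S_{l+1}$ is strictly smaller, and again $S_{l+1} \prec S_l$.

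The main obstacle is the bookkeeping of the re-sort in the case $q < n$: one must confirm that removing a single copy of the maximum leaves the higher coordinates literally unchanged and that it is $v - 1$, rather than some original intermediate term, that surfaces in position $q$. Both facts rest on the inequality $v \ge s_p + 2$ secured at the outset, which is precisely the point where the necessity half of Landau's conditions, packaged through Lemma \ref{thm:nearlyreg}, is needed; everything else is then a direct comparison under the definition of $\preceq$ together with the finiteness of $\bS_n$.
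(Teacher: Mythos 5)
Your argument is correct, but it reaches the conclusion by a genuinely different mechanism than the paper. You prove that each jump strictly descends the total order, $S_{l+1} \prec S_l$, and then appeal to the finiteness of $\bS_n$: an infinite strictly decreasing chain is impossible, and by step (iv) the algorithm halts only at $R_n$. Note that once Lemma \ref{thm:12Landau} is in hand, the tuple $S_{l+1}$ is already non-decreasing \emph{in place}, so the re-sorting bookkeeping you worry about in the case $q<n$ evaporates: positions $q+1,\dots,n$ are untouched and position $q$ drops from $s_q$ to $s_q-1$, which gives $S_{l+1}\prec S_l$ directly from the definition of $\preceq$; your separate derivation of $s_q\ge s_p+2$ essentially repeats the final case of that lemma's proof (and inherits the same small technicality there, namely that Lemma \ref{thm:nearlyreg} is stated for multiplicity $m>1$). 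The paper instead tracks the $1$-norm distance to $R_n$: it shows $s_p<r_p$ and $r_q<s_q$ by a case analysis against the regular or nearly-regular sequence, deduces $d(S_{l+1},R_n)=d(S_l,R_n)-2$, and invokes the parity result of Lemma \ref{thm:even} to conclude the distance reaches $0$. Your route is shorter and has the side benefit of actually verifying the assertion, made but not explicitly proved in the paper, that the jumps go \emph{down} $(\bS_n,\preceq)$. What it gives up is the exact step count $M=\frac{1}{2}d(R_n,S_0)$, which the paper extracts from this very proof and then uses both in the sufficiency theorem and throughout Section 4 to obtain the $O(n^2)$ bound; under your argument that count would have to be established separately.
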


\begin{proof}
Consider $S_l = (s_1, s_2, \dots, s_n) \neq R_n$ and $S_{l+1} = (s_1, \dots, s_{p-1}, s_p + 1, s_{p+1}, \dots, s_{q-1}, s_q - 1, s_{q+1}, \dots, s_n)$ for some $l \ge 0$ that appear in the algorithm.  

We first show $s_p < r_p$.  Suppose $n$ is odd and that, on the contrary, $s_p \ge r_p$.  Since $s_1 = s_p$ and $r_1 = r_p$, we deduce that $s_1 \ge r_1$.  This implies $s_i \ge r_i, 1 \le i \le n$.  Since $S_l \neq R_n$, there must exist some $k, 1 \le k \le n$ such that $s_i = r_i, 1 \le i < k$ and $s_j > r_j, k \le j \le n$.  So, $\displaystyle\sum\limits_{i=1}^n s_i > \displaystyle\sum\limits_{i=1}^n r_i = \dbinom{n}{2}$, a contradiction.  In the remainder of the proof that $s_p < r_p$, we suppose that $n$ is even.  If $s_p > r_p$, then $s_1 > r_1$ as $s_1 = s_p$ and $r_1 \le r_p$.  Since $s_1 \le s_2 \le \dots \le s_n$ and $r_i = r_1$ or $r_i = r_1 + 1$, we deduce that $s_i \ge r_i$ for all $i, 2 \le i \le n$.  Thus, $\displaystyle\sum\limits_{i=1}^n s_i > \displaystyle\sum\limits_{i=1}^n r_i = \dbinom{n}{2}$, a contradiction.  To show that $s_p = r_p$ leads to a contradiction in the case $n$ is even, we consider the following three possibilities for the index $p$: $1 \le p < \dfrac{n}{2}$, or $\dfrac{n}{2}+1 \le p < n$, or $p = \dfrac{n}{2}$.

Suppose $1 \le p < \dfrac{n}{2}$.  So, $s_1 = \dots s_p = r_p = r_1$.  That is, $s_i = r_i$ for all $i, 1 \le i \le p$.  Now, $s_p < s_{p+1}$ and $r_p = r_{p+1}$ since $p < \dfrac{n}{2}$.  Thus, $r_{p+1} < s_{p+1}$, and in fact, $r_n \le s_{p+1} \le s_{p+2} \le \dots \le s_n$.  So, for all $i$, $p+2 \le i \le n$, $r_i \le s_i$ since $r_i = r_p$ or $r_i = r_p + 1$.  Thus, $\displaystyle\sum\limits_{i=1}^n s_i > \displaystyle\sum\limits_{i=1}^n r_i = \dbinom{n}{2}$.

Next, suppose $\dfrac{n}{2} + 1 \le p \le n$.  So, $s_1 = \dots = s_p = r_p = \dots = r_n = r_1 + 1$ since $p \ge \dfrac{n}{2} + 1$.  This implies that $s_1 > r_1$ and $s_i \ge r_i$ for all $i, 2 \le i \le n$.  Thus, $\displaystyle\sum\limits_{i=1}^n s_i > \displaystyle\sum\limits_{i=1}^n r_i = \dbinom{n}{2}$.  

Lastly, suppose $p = \dfrac{n}{2}$.  This implies that $s_i = r_i$ for all $i, 1 \le i \le \dfrac{n}{2}$, and $s_i \ge r_i$ for all $i, \dfrac{n}{2}+1 \le i \le n$.  Since $S_l \neq R_n$, there exists $k, \dfrac{n}{2} + 1 \le k \le n$, such that $s_k > r_k$.  Consequently, $\displaystyle\sum\limits_{i=1}^n s_i > \displaystyle\sum\limits_{i=1}^n r_i = \dbinom{n}{2}$.

In any case we obtain $\displaystyle\sum\limits_{i=1}^n s_i > \dbinom{n}{2}$, a contradiction to the fact that $S_l$ satisfies \emph{(2)}, and consequently, $s_p < r_p$, as desired  A similar proof shows $r_q < s_q$.

Since $s_p < r_p$, then $s_p + 1 \le r_p$.  So, as $r_p - s_p = |r_p - s_p|$ and $r_p - (s_p + 1) = |r_p - (s_p + 1)|$, we deduce $|r_p - s_p| = r_p - s_p = r_p - (s_p + 1) + 1 = |r_p - (s_p +1)| +1$.  And since $r_q < s_q$, then $r_q \le s_q - 1$.  Using $s_q - r_q = |s_q - r_q|$ and $s_q - 1 - r_q = |s_q - 1 - r_q|$, we deduce $|s_q - r_q| = s_q - r_q = s_q - 1 - r_q + 1 = |(s_q - 1) - r_q| + 1$.

We now see that \\
$d(S_l, R_n) = \displaystyle\sum\limits_{i=1}^n |s_i - r_i| = \left(\displaystyle\sum\limits_{i=1; i \neq p,q}^{n} |s_i - r_i|\right) + |s_p - r_p| + |s_q - r_q| = \left(\displaystyle\sum\limits_{i=1; i \neq p,q}^{n} |s_i - r_i|\right) + |(s_p + 1) - r_p| + |(s_q - 1) - r_q| + 2 = d(S_{l+1}, R_n) + 2$.

Since $d(S_0, R_n)$ is even (by Lemma \ref{thm:even}) and the algorithm reduces the distance to $R$ by 2 at each step, we must eventually arrive at $R_n$.
\end{proof}

Of course, we desire each sequence obtained at each step of the algorithm to be a score sequence, so a tool to help establish that is given next.

\begin{lem}\label{thm:strong}
If, in the algorithm, $S_{l+1}$ is the score sequence of some strong $n$-tournament for some $l \ge 0$, then $S_l$ is the score sequence of some $n$-tournament (not necessarily strong).
\end{lem}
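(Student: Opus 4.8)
The plan is to start from a strong $n$-tournament $T'$ realizing $S_{l+1}$ and to modify $T'$ by a single arc-reversal along a directed path, so that the modified tournament realizes $S_l$. Recall from step (ii) of the algorithm that $S_{l+1}$ is obtained from $S_l$ by replacing one score $s_p$ (the minimum value, occurring at position $p$) with $s_p + 1$ and one score $s_q$ (the maximum value, occurring at position $q$) with $s_q - 1$. Thus, as multisets, $S_l$ is recovered from $S_{l+1}$ by lowering one occurrence of the value $s_p + 1$ back to $s_p$ and raising one occurrence of the value $s_q - 1$ back to $s_q$. The whole lemma therefore reduces to transferring one unit of score, inside $T'$, from a vertex of score $s_p + 1$ to a vertex of score $s_q - 1$, while leaving every other score fixed.

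First I would locate the two vertices to be adjusted. By construction the value $s_p + 1$ occupies position $p$ of $S_{l+1}$ and the value $s_q - 1$ occupies position $q$, with $p < q$. Since $S_{l+1} \in \bS_n$ by Lemma \ref{thm:12Landau}, the sequence $S_{l+1}$ is non-decreasing, so $s_p + 1 \le s_q - 1$. If $s_p + 1 < s_q - 1$, then a vertex $u$ of $T'$ with score $s_p + 1$ and a vertex $w$ with score $s_q - 1$ automatically satisfy $u \neq w$; if $s_p + 1 = s_q - 1$, then this common value occupies the two distinct positions $p$ and $q$ of $S_{l+1}$, so at least two vertices of $T'$ carry that score, and again $u$ and $w$ may be chosen distinct. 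In either case we obtain distinct vertices $u, w \in V(T')$ with $|O(u)| = s_p + 1$ and $|O(w)| = s_q - 1$.

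Next I would exploit strong connectivity. Because $T'$ is strong, there is a directed path $u = x_0 \to x_1 \to \dots \to x_k = w$ from $u$ to $w$. Let $T$ be the tournament obtained from $T'$ by reversing every arc of this path, replacing each $x_i \to x_{i+1}$ with $x_{i+1} \to x_i$. As the path is simple, reversal leaves $T$ a tournament and changes only the scores of $u$ and $w$: the score of $u$ drops by $1$ (it loses its arc to $x_1$), the score of $w$ rises by $1$ (the arc formerly entering $w$ from $x_{k-1}$ now leaves $w$), while each internal vertex $x_i$ trades one out-arc for another and keeps its score. Hence the multiset of scores of $T$ is exactly that of $T'$ with one $s_p + 1$ lowered to $s_p$ and one $s_q - 1$ raised to $s_q$, which is precisely $S_l$. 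Therefore $T$ is an $n$-tournament (not necessarily strong) with score sequence $S_l$, as required.

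The one place that genuinely requires care — and the reason the hypothesis that $S_{l+1}$ is strong cannot be dropped — is the existence of a directed path from the low-score vertex $u$ to the high-score vertex $w$. In an arbitrary tournament the vertices of large score tend to lie in dominant strong components and need not be reachable from a vertex of small score, so no such path, and hence no single path-reversal of this kind, need exist. Strong connectivity removes this obstacle completely, and the remaining bookkeeping for the reversal, together with the distinctness of $u$ and $w$, is routine.
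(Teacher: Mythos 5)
Your proposal is correct and follows essentially the same route as the paper: realize $S_{l+1}$ by a strong tournament, take a directed path from a vertex of score $s_p+1$ to a vertex of score $s_q-1$, and reverse it, noting that only the endpoint scores change. The paper simply fixes the two vertices as those occupying positions $p$ and $q$ (so their distinctness is automatic), whereas you verify distinctness separately; otherwise the arguments coincide.
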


\begin{proof}
Let $S_{l+1} = (s_1, \dots, s_{p-1}, s_p + 1, s_{p+1}, \dots, s_{q-1}, s_q - 1, s_{q+1}, \dots, s_n) = (s_1', s_2', \dots, s_n')$ be the score sequence of some strong $n$-tournament $T'$ with vertices $v_1, v_2, \dots, v_n$, where $v_i$ has score $s_i'$ for all $i, 1 \le i \le n$.  Since $T'$ is strong, there exists a path from $v_p$ to $v_q$.  If we reverse the orientation of this path, we obtain a new $n$-tournament $T$, in which the only vertices whose scores are different are $v_p$ and $v_q$.  Specifically, $v_p$ will have score $s_p' - 1 = s_p$, and $v_q$ will have score $s_q' +1 = s_q$.  Since $s_{p-1} < s_p$ and $s_q < s_{q+1}$, $T$ will have score sequence $(s_1', \dots, s_{p-1}', s_p' - 1, s_{p+1}', \dots, s_{q-1}', s_q' + 1, s_{q+1}', \dots, s_n') = (s_1, s_2, \dots, s_n) = S_l$. 
\end{proof}

A non-strong score sequence that might result in Lemma \ref{thm:strong} occurs only in a special case as described next.

\begin{lem}\label{thm:notstrongend}
If, in the algorithm, $S_{l+1}$ is the score sequence of some strong $n$-tournament and $S_l$ is the score sequence of some $n$-tournament which is not strong, then $l = 0$ and $S_l = S_0$.
\end{lem}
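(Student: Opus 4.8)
The plan is to argue by contradiction: I assume $l \ge 1$ and trace the failure of strongness of $S_l$ back through the previous jump $S_{l-1} \to S_l$, where it collides with Lemma \ref{lem:contra}. Write $S_l = (s_1, \dots, s_n)$. Since $S_l$ is a non-strong score sequence, Lemma \ref{thm:strongLandau} produces an index $k$ with $1 \le k < n$ and $\sum_{i=1}^k s_i = \binom{k}{2}$; and since $S_l$ lies in $\bS_n$, Lemma \ref{lem:contra} forbids $s_k = s_{k+1}$, so in fact $s_k < s_{k+1}$. (The hypothesis that $S_{l+1}$ is strong can be used to place $k$ strictly between the two jump indices of $S_l$, since a tight truncated sum at an index below $p$ or at/above $q$ would persist for $S_{l+1}$; but the contradiction below needs only this single tight, strictly-increasing index.)

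Now suppose $l \ge 1$, and let $S_{l-1} = (t_1, \dots, t_n) \in \bS_n$ be the sequence from which $S_l$ was produced, with jump indices $p', q'$, so that $t_1 = \dots = t_{p'} < t_{p'+1}$, $t_{q'-1} < t_{q'} = \dots = t_n$, and $S_l$ agrees with $S_{l-1}$ except that $s_{p'} = t_{p'} + 1$ and $s_{q'} = t_{q'} - 1$. The partial-sum bookkeeping used in the proof of Lemma \ref{thm:12Landau} gives $\sum_{i=1}^k s_i = \sum_{i=1}^k t_i + 1$ whenever $p' \le k < q'$; as $S_{l-1}$ satisfies (1), this would force $\sum_{i=1}^k s_i > \binom{k}{2}$, contradicting the tightness at $k$. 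Hence either $k < p'$ or $k \ge q'$.

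The heart of the argument is then to pin $k$ to the boundary of a constant run of $S_{l-1}$. If $k < p'$, then $s_k = t_k = t_1$; were $k < p' - 1$ we would also have $s_{k+1} = t_{k+1} = t_1 = s_k$, contradicting $s_k < s_{k+1}$, so $k = p' - 1$. Symmetrically, if $k \ge q'$, then $s_{k+1} = t_{k+1} = t_n$, and were $k > q'$ we would have $s_k = t_k = t_n = s_{k+1}$, again impossible, so $k = q'$. In either case the two modified coordinates do not change the truncated sum $\sum_{i=1}^k$ (in the first case both altered indices exceed $k$; in the second both are at most $k$ and their $\pm 1$ changes cancel), so $\sum_{i=1}^k t_i = \binom{k}{2}$. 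But the constant-run structure of $S_{l-1}$ yields $t_k = t_{k+1}$, namely $t_{p'-1} = t_{p'}$ in the first case and $t_{q'} = t_{q'+1}$ in the second, the latter using $k = q' < n$. This is precisely the configuration that Lemma \ref{lem:contra} forbids for $S_{l-1} \in \bS_n$, a contradiction. Therefore $l = 0$ and $S_l = S_0$.

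The main obstacle is this boundary-pinning step: I must combine the strict inequality $s_k < s_{k+1}$ (from Lemma \ref{lem:contra} applied to $S_l$) with the exact locations of the two altered coordinates relative to the initial and final constant runs of $S_{l-1}$, and then verify that the alteration really does leave $\sum_{i=1}^k$ unchanged, so that Lemma \ref{lem:contra} can be invoked a second time, now for $S_{l-1}$. The two cases are mirror images of one another under reversing and complementing the scores, so it suffices to treat one case with care and obtain the other by symmetry.
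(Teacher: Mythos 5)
Your proof is correct, and it departs from the paper's in one substantive way. Both arguments share the same skeleton: extract a tight index $k$ of $S_l$ from Lemma \ref{thm:strongLandau}, rule out $p' \le k < q'$ because the corresponding partial sum of $S_{l-1}$ would fall to $\binom{k}{2}-1$ and violate condition \emph{(1)}, and dispose of the boundary indices $k = p'-1$ and $k = q'$ by applying Lemma \ref{lem:contra} to $S_{l-1}$ (the paper's cases $k = x-1$ and $k = y$). Where you diverge is in the remaining ranges $k < p'-1$ and $k > q'$: the paper handles these by relating the jump indices of $S_{l-1}\to S_l$ to those of $S_l\to S_{l+1}$ (its $x = p+1$, $y = q-1$ bookkeeping) and arguing that a tight sum there would persist into $S_{l+1}$, contradicting the hypothesis that $S_{l+1}$ is strong. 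You instead apply Lemma \ref{lem:contra} to $S_l$ itself to get $s_k < s_{k+1}$, which forces $k$ onto the boundary of a constant run of $S_{l-1}$ directly. The payoff is that your argument never invokes the strongness of $S_{l+1}$ or the indices $p,q$ at all: you in fact prove the stronger statement that for every $l \ge 1$ the sequence $S_l$ has no tight truncated sum, hence (being a score sequence) realizes only strong tournaments. Your verification that the two altered coordinates leave $\sum_{i=1}^k$ unchanged in each boundary case is the one point requiring care, and both checks are right, including the observation that $k = q' < n$ guarantees $t_{q'+1}$ exists; the vacuity of the case $k < p'$ when $p' = 1$ is also handled correctly since $k \ge 1$.
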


\begin{proof}
Suppose, for some $l \ge 0$, that $S_{l+1}$ is the score sequence of some strong $n$-tournament.  By Lemma \ref{thm:strong}, $S_l$ is the score sequence of some $n$-tournament $T$.  We claim that if $T$ not strong, then $S_l =S_0$.  Aiming for a contradiction, suppose $S_l \neq S_0$.  Thus, $l$ must be at least 1, which implies $S_{l-1}$ exists.    

Recall that by the algorithm, there exists $p$ and $q, 1 \le p < q \le n$, such that $s_1 = s_2 = \dots = s_p < s_{p+1}$ and $s_{q-1} < s_q = \dots = s_n$.  So, $S_{l+1} = (s_1, s_2, \dots, s_{p-1}, s_p +1, s_{p+1}, \dots, s_{q-1}, s_q - 1, s_{q+1}, \dots, s_n)$.  Since $S_{l-1}$ exists, by the algorithm there must also exist $x$ and $y, 1 \le x < y \le n$, such that $S_{l-1} = (s_1, \dots, s_{x-1}, s_x - 1, s_{x+1}, \dots, s_{y-1}, s_y +1, s_{y+1}, \dots, s_n)$, where $s_1 = \dots = s_{x-1} = s_x - 1 < s_{x+1}$ and $s_{y-1} < s_y + 1 = s_{y+1} = \dots = s_n$.

To help clarify the following arguments, we may view $S_l$ either as $(s_1, \dots, s_{p-1}, s_p, s_{p+1}, \dots, s_{q-1}, s_q, s_{q+1}, \dots, s_n)$ in terms of $p$ and $q$ where $s_1 = \dots = s_p < s_{p+1}$ and $s_{q-1} < s_q = \dots = s_n$, or in terms of $x$ and $y$ where $s_1 = \dots = s_x < s_{x+1}$ and $s_{y-1} < s_y = \dots = s_n$ so that $S_l = (s_1, \dots, s_{x-1}, s_x, s_{x+1}, \dots, s_{y-1}, s_y, s_{y+1}, \dots, s_n)$.  

Now, the assumption that $S_l$ is not strong implies, by Lemma \ref{thm:strongLandau}, that there is a least positive integer $k < n$ such that $\displaystyle\sum\limits_{i=1}^k s_i = \dbinom{k}{2}$.  We now consider the location of $k$ in $S_l$ and aim to show a contradiction for all $k, 1 \le k < n$.

If $x \le k < y$, notice that the sum of the first $k$ terms in $S_{l-1}$ is exactly one less than $\displaystyle\sum\limits_{i=1}^k s_i = \dbinom{k}{2}$.  Yet, $S_{l-1}$ was produced by the algorithm, and by Lemma \ref{thm:12Landau}, must satisfy condition \emph{(1)}, a contradiction.  So, we arrive at a contradiction for all $k, x \le k < y$ (we refer to this statement as \A).

If $x \neq 1$, the two versions of $S_l$ clearly show $x = p+1$, and if $y \neq n$, the two versions of $S_l$ show that $y = q - 1$.  Thus, we consider are four cases: $x = 1$ or $p+1$ and $y = q - 1$ or $n$.

Suppose $x = p+1$ (regardless of the value of $y$).  If $1 \le k \le p - 1$, notice that the sum of the first $k$ terms in $S_{l+1}$ is equal to $\displaystyle\sum\limits_{i=1}^k s_i = \dbinom{k}{2}$.  This implies by Lemma \ref{thm:strongLandau}, that $S_{l+1}$ is not strong, a contradiction.  If $k = p = x - 1$, then notice that the sum of the first $x - 1$ terms in $S_{l-1}$ is equal to $\displaystyle\sum\limits_{i=1}^{x-1} s_i = \dbinom{x-1}{2}$.  By the definition of $x$, $s_{x-1} = s_x - 1$, which is the $x$th term in $S_{l-1}$.  Yet, by Lemma \ref{lem:contra}, this contradicts the assumption that $S_{l-1}$ satisfies condition \emph{(1)}.  Thus, if $x = p+1$, we arrive at a contradiction for all $k, 1 \le k \le p = x - 1$ (we refer to this statement as \B).

Now, suppose $y = q - 1$ (regardless of the value of $x$).  If $q \le k < n$, notice that the sum of the first $k$ terms in $S_{l+1}$ is equal to $\displaystyle\sum\limits_{i=1}^k s_i = \dbinom{k}{2}$.  This implies by Lemma \ref{thm:strongLandau}, that $S_{l+1}$ is not strong, a contradiction.  If $k = q - 1 = y$, then notice that the sum of the first $q - 1$ terms in $S_{l-1}$ is equal to $\displaystyle\sum\limits_{i=1}^{y} s_i = \dbinom{y}{2}$.  By the definition of $y$, $s_y+1 = s_{y+1}$, which is the $(y+1)$th term in $S_{l-1}$.  Yet, by Lemma \ref{lem:contra}, this contradicts the assumption that $S_{l-1}$ satisfies condition \emph{(1)}.  Thus, if $y = q - 1$, we arrive at a contradiction for all $k, y = q - 1 \le k < n$ (we refer to this statement as \C).

Finally, we explicitly treat the four cases mentioned above.  If $x = 1$ and $y = n$, then as we have shown above $x \le k < y$ produces a contradiction, so in this case a contradiction must hold for all $k, 1 \le k < n$.  Next, suppose $x = 1$ and $y = q - 1$.  By \A and \C, we arrive at a contradiction for all $k, 1 \le k < n$.  Next, suppose $x = p +1$ and $y = n$.  By \A and \B, we arrive at a contradiction for all $k, 1 \le k < n$. Lastly, suppose $x = p+1$ and $y = q - 1$.  By \A, \B, and \C, we arrive at a contradiction for all $k, 1 \le k < n$.

This exhausts all possible cases.  So, if $S_l$ is not strong, then $S_{l-1}$ does not exist.  Thus, $S_l$ must be $S_0$.
\end{proof}

Sufficiency is now a formality.

\begin{thm}
If $S = (s_1, s_2, \dots, s_n)$ is in $\bS_n$, then $S$ is the score sequence of some $n$-tournament.
\end{thm}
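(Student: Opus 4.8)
The plan is to run the algorithm on $S$ and then argue backwards along the chain of jumps it produces, pulling realizability down from the known endpoint $R_n$. First I would dispose of the trivial case: if $S = R_n$, then $S$ is by definition the regular or nearly-regular score sequence and hence is realized by a (nearly-)regular $n$-tournament. So assume $S \neq R_n$ and set $S_0 = S$. By Lemma \ref{thm:12Landau} every iterate remains in $\bS_n$, and by Lemma \ref{thm:alg1} the algorithm halts after finitely many steps, producing a chain $S_0 \succeq S_1 \succeq \dots \succeq S_m = R_n$.

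The heart of the argument is a downward induction establishing that each $S_l$ is a score sequence, and is moreover strong for $l \ge 1$. For the base case, $S_m = R_n$ is the score sequence of a regular or nearly-regular $n$-tournament, and such a tournament is strong by the easy application of Lemma \ref{thm:strongLandau} noted earlier; hence $S_m$ is a strong score sequence. For the inductive step, suppose $1 \le l+1 \le m$ and $S_{l+1}$ is a strong score sequence. Lemma \ref{thm:strong} then yields that $S_l$ is the score sequence of some $n$-tournament. If in addition $l \ge 1$, Lemma \ref{thm:notstrongend} forbids $S_l$ from being non-strong (a non-strong $S_l$ paired with a strong $S_{l+1}$ would force $l = 0$), so $S_l$ is in fact strong and the induction continues.

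Carrying this induction down to the bottom, I conclude that $S_1, \dots, S_m$ are all strong score sequences and, at the final step from $S_1$ to $S_0$, that $S_0 = S$ is the score sequence of some $n$-tournament (possibly not strong, which is all that is required). This completes the proof.

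I do not expect a genuine obstacle here, and the paper itself flags this step as a formality. The only point requiring care is the interplay between Lemmas \ref{thm:strong} and \ref{thm:notstrongend}: Lemma \ref{thm:strong} only permits pulling realizability back across a jump whose upper endpoint is \emph{strong}, so one must verify that strongness propagates all the way down the chain rather than merely realizability. Lemma \ref{thm:notstrongend} supplies exactly this, guaranteeing that the only iterate allowed to be non-strong is $S_0$ itself; thus the strongness hypothesis needed for each successive application of Lemma \ref{thm:strong} is available at every step of the descent.
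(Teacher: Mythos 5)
Your proposal is correct and follows essentially the same route as the paper: the trivial case $S = R_n$, then a downward induction from $R_n$ along the algorithm's chain, using Lemma \ref{thm:strong} to pull realizability back one jump and Lemma \ref{thm:notstrongend} to guarantee that strongness (the hypothesis Lemma \ref{thm:strong} needs) persists at every iterate except possibly $S_0$. The paper's proof is the same argument phrased as induction on $j$ for $S_{M-j}$; no substantive difference.
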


\begin{proof}
The theorem is clearly true if $S = R_n$.  So, suppose $S \neq R_n$.  By Lemma \ref{thm:alg1} the algorithm produces a sequence of $n$-tuples $S = S_0, S_1, \dots, S_M$, terminating in $S_M = R_n$ for some integer $M \ge 1$ (Actually $M = \frac{1}{2}d(R_n,S)$ by the last sentence of the proof of Lemma \ref{thm:alg1}).  By Lemma \ref{thm:12Landau}, $S_l$ satisfies both \emph{(1)} and \emph{(2)} with $s_1 \le s_2 \le \dots \le s_n$ for all $l, 0 \le l \le M$.  We now show, by induction on $j$, that $S_{M-j}$ is the score sequence of some strong $n$-tournament, for all $j, 0 \le j < M$.  If $j = 0$, then $S_{M-0}= S_M = R_n$, the regular or nearly-regular score sequence, which is strong.  Now, suppose $S_{M - j}$ is the score sequence of some strong $n$-tournament for some $j, 0 \le j < M - 1$.  Since $S_{M-j}$ is the score sequence of some strong $n$-tournament, by Lemma \ref{thm:strong}, $S_{M-j-1}$ is the score sequence of some $n$-tournament $T$.  If $T$ is not strong, then Lemma \ref{thm:notstrongend} implies that $S_{M-j-1} = S_0$.  That is, $M - j - 1 = 0$, a contradiction.  Thus, $T$ must be strong.

So, by induction, $S_l$ is the score sequence of some strong $n$-tournament, for all $l, 1 \le l \le M$.  In particular, $S_1$ is the score sequence of some strong $n$-tournament.  By Lemma \ref{thm:strong}, $S_0 = S$ is the score sequence of some $n$-tournament, as desired.
\end{proof}

Upon careful examination, the description and verification of the algorithm given above is much less cumbersome and involved than a description and verification of a possible ``inverse algorithm'' that starts with $R_n$ and ends with $S_0$ and that exactly reverses the steps of the algorithm given above.

\section{Complexity}

In the previous section we proved that if the integral sequence $S_0$ satisfies Landau's conditions, then $S_0$ is a score sequence.  In fact, the algorithm gives a constructive method by which any regular (or nearly-regular) tournament can be transformed into a tournament with such a score sequence $S_0$.  Indeed, if $T_{l+1}$ is any strong $n$-tournament with score sequence $S_{l+1}$, then, as there exists a path from any vertex to any other vertex in $T_{l+1}$, the reversal of a path in $T_{l+1}$ from a vertex of score $s_p+1$ to a vertex of score $s_q - 1$ results in a tournament $T_l$ with score sequence $S_l$.  So, the number of steps in the algorithm (i.e., the number of jumps down $(\bS_n, \preceq)$) is a measure of the complexity of the construction of an $n$-tournament from a regular (or nearly-regular) tournament to a tournament with score sequence $S_0$.  We claim that there is no case that requires more jumps than when $S_0$ is taken to be $Tr_n = (0,1,2, \dots, n - 1)$, the score sequence of the transitive $n$-tournament.  For any sequence $S$ satisfying Landau's conditions, the last sentence in the proof of Lemma \ref{thm:alg1} shows that the number of jumps in the algorithm is $\frac{1}{2}d(R_n,S)$.  So, the next result will confirm our claim.

\begin{thm}\label{thm:distance}
Let $S = (s_1, s_2, \dots, s_n)$ be the score sequence of some $n$-tournament and let $Tr_n = (0,1,2, \dots, n - 1)$.  Then $d(R_n, S) \le d(R_n,Tr_n)$.  
\end{thm}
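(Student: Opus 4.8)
The plan is to rewrite the $\ell_1$ distance to $R_n$ as the total variation of a sequence of partial-sum differences, and then exploit the fact that $Tr_n$ attains Landau's condition \emph{(1)} with equality. For a non-decreasing $n$-tuple $X = (x_1, \dots, x_n)$ write $P_k(X) = \sum_{i=1}^k x_i$, and for the score sequence $S$ set $D_k = P_k(R_n) - P_k(S)$ for $0 \le k \le n$. Since both $R_n$ and $S$ satisfy condition \emph{(2)}, we have $D_0 = D_n = 0$. Writing $\delta_k = r_k - s_k = D_k - D_{k-1}$, we get $d(R_n, S) = \sum_{k=1}^n |r_k - s_k| = \sum_{k=1}^n |D_k - D_{k-1}|$; that is, $d(R_n, S)$ equals the total variation of the sequence $D_0, \dots, D_n$. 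Because $Tr_n$ meets condition \emph{(1)} with equality, $P_k(Tr_n) = \dbinom{k}{2}$, so the analogous sequence $D_k^{Tr} = P_k(R_n) - \dbinom{k}{2}$ satisfies $D_k \le D_k^{Tr}$ for every $k$, using only that $S$ satisfies condition \emph{(1)}.

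The crux is to show that $D_0, \dots, D_n$ is unimodal (non-decreasing, then non-increasing); equivalently, that the increment sequence $\delta_1, \dots, \delta_n$ is first weakly non-negative and then weakly non-positive, changing sign at most once. Here I would use the rigid structure of $R_n$: the consecutive differences $r_{k+1} - r_k$ lie in $\{0, 1\}$, and they are all $0$ when $n$ is odd, while for even $n$ exactly one of them equals $1$, namely at $k = n/2$. Since $s_{k+1} - s_k \ge 0$, the quantity $\delta_{k+1} - \delta_k = (r_{k+1} - r_k) - (s_{k+1} - s_k)$ is $\le 0$ at every index except possibly $k = n/2$ in the even case; thus $\delta$ is non-increasing apart from one possible upward jump of size at most $1$. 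I expect this single exceptional jump to be the main obstacle, and it is resolved by integrality. If $\delta_{n/2} \ge 0$, then the whole first half of $\delta$ is non-negative while the second half is non-increasing, so $\delta$ is positive-then-negative; if instead $\delta_{n/2} < 0$, then $\delta_{n/2} \le -1$ forces $\delta_{n/2+1} \le 0$, after which $\delta$ stays non-positive, again yielding a single sign change. In every case $\delta$ changes sign at most once, so $D$ is unimodal.

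With unimodality in hand the conclusion is immediate. For any sequence with $D_0 = D_n = 0$, rising from $0$ to a maximum value $M = \max_k D_k$ and returning to $0$ forces the total variation to be at least $2M$, with equality precisely when the sequence is unimodal. Hence $d(R_n, S) = \sum_k |D_k - D_{k-1}| = 2\max_k D_k$, whereas $d(R_n, Tr_n) = \sum_k |D_k^{Tr} - D_{k-1}^{Tr}| \ge 2\max_k D_k^{Tr}$. Combining these with the pointwise bound $D_k \le D_k^{Tr}$ gives $d(R_n, S) = 2\max_k D_k \le 2\max_k D_k^{Tr} \le d(R_n, Tr_n)$, as desired. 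As a by-product, evaluating the peak of $D_k^{Tr}$ at $k = \lfloor n/2 \rfloor$ recovers the extremal distances $d(R_n, Tr_n) = \tfrac{n^2 - 1}{4}$ for odd $n$ and $\tfrac{n^2 - 2n}{4}$ for even $n$, matching the maximal jump counts announced in the introduction.
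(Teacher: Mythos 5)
Your proof is correct, and it reaches the conclusion by a genuinely cleaner route than the paper's. Both arguments ultimately rest on the same two ingredients: the fact that the set $L=\{i \mid s_i\le r_i\}$ is (up to indices contributing zero) an initial segment $\{1,\dots,k\}$ --- which is exactly your unimodality of $D$, and your treatment of the single upward jump of $\delta$ at $k=n/2$ in the even case mirrors the paper's subcase analysis on where the crossover index falls relative to $n/2$ --- and Landau's condition \emph{(1)} applied at that index, which is your pointwise bound $D_k\le D_k^{Tr}$. Where you genuinely diverge is the endgame: the paper computes $d(R_n,Tr_n)$ explicitly and then proves $\frac{n-1}{2}k-\binom{k}{2}\le\frac{n^2-1}{8}$ (and its two even-parity analogues) by factoring a quadratic in $k$ in each case separately, whereas you obtain $2\max_k D_k^{Tr}\le d(R_n,Tr_n)$ for free from the general lower bound on the total variation of a sequence with $D_0^{Tr}=D_n^{Tr}=0$. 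This eliminates all of the quadratic algebra, unifies the two parities, and never requires evaluating $d(R_n,Tr_n)$ at all; the only thing the paper's explicit computation buys in exchange is the closed form of $d(R_n,Tr_n)$, which it needs anyway for the jump counts $\frac{n^2-1}{8}$ and $\frac{n^2-2n}{8}$ --- and which you recover as a by-product by evaluating $D^{Tr}$ at its peak $k=\lfloor n/2\rfloor$, so nothing is lost.
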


\begin{proof}
The result is trivial for $S = R_n$ and $S = Tr_n$.  So, assume $S \neq R_n$ and $S \neq Tr_n$.  We break the proof into two cases where $R_n$ is the regular or nearly-regular score sequence.  First suppose that $n$ is odd so that $R_n = \left(\dfrac{n-1}{2}, \dfrac{n-1}{2}, \dots, \dfrac{n-1}{2}\right)$.  Recall in the proof of Lemma \ref{thm:even} that $d(R_n,S) = 2\displaystyle\sum\limits_{i \in L} |r_i - s_i| = 2\displaystyle\sum\limits_{i \in L} (r_i - s_i)$, where $L = \{i | s_i \le r_i\}$.  Since $R_n$ is the regular score sequence and $s_1 \le s_2 \le \dots \le s_n$, $d(R_n,S) = 2\displaystyle\sum\limits_{i =1}^k \left(\dfrac{n-1}{2} - s_i\right)$ for some $k$, $1 \le k < n$, where $s_i \le r_i$ for all $i \le k$, and $s_i > r_i$ for all $i$, $k < i \le n$.

It is easy to see that $d(R_n,Tr_n) = 2\displaystyle\sum\limits_{i=1}^{\frac{n+1}{2}} |\dfrac{n-1}{2} - (i - 1)| = 2\displaystyle\sum\limits_{i=1}^{\frac{n-1}{2}} i = 2\dbinom{\frac{n+1}{2}}{2} = \dfrac{n^2 - 1}{4}$.

We now show that for the integer $k$ above $\left(\dfrac{n-1}{2}\right)k - \dbinom{k}{2} \le \dfrac{n^2 - 1}{8}$.  Suppose on the contrary, $\left(\dfrac{n-1}{2}\right)k - \dbinom{k}{2} > \dfrac{n^2 - 1}{8}$.  This implies that $4k(n - 1) - 4(k^2 - k) > n^2 - 1$, or $4kn - 4k - 4k^2 + 4k - n^2 + 1 > 0$, or $k^2 - kn + \dfrac{n^2 - 1}{4} < 0$, or $\left(k - \dfrac{n+1}{2}\right)\left(k - \dfrac{n-1}{2}\right) < 0$.  So, either $k > \dfrac{n+1}{2}$ and $k < \dfrac{n-1}{2}$ or $k < \dfrac{n+1}{2}$ and $k > \dfrac{n-1}{2}$.  Since $k$ is an integer and $n$ is odd, both situations produce contradictions.  Thus, $\left(\dfrac{n-1}{2}\right)k - \dbinom{k}{2} \le \dfrac{n^2 - 1}{8}$, as desired.

This implies $d(R_n, S) = 2\displaystyle\sum\limits_{i=1}^k \left(\dfrac{n-1}{2} - s_i\right) = 2\left(\dfrac{k(n-1)}{2} - \displaystyle\sum\limits_{i=1}^k s_i\right) \le 2\left(\dfrac{k(n-1)}{2} - \dbinom{k}{2}\right)$.  The last inequality follows from the fact that $S$ is a score sequence and so satisfies \emph{(1)}.  By the above argument, we deduce $d(R_n,S) \le \dfrac{n^2 - 1}{4} = d(R_n,Tr_n)$, as desired.

Now suppose $n$ is even so that $R_n$ is the nearly-regular score sequence $\left(\dfrac{n-2}{2}, \dfrac{n-2}{2}, \dots, \dfrac{n-2}{2}, \dfrac{n}{2}, \dfrac{n}{2}, \dots, \dfrac{n}{2}\right)$.  It is easy to see that $d(R_n,Tr_n) = 2\displaystyle\sum\limits_{i=1}^{\frac{n}{2}+1} |r_i - (i - 1)| = 2\displaystyle\sum\limits_{i=1}^{\frac{n}{2}-1} i = 2\dbinom{\frac{n}{2}}{2} = \dfrac{n^2 - 2n}{4}$.

Let $k+1$ be the smallest positive integer such that $s_{k+1} > r_{k+1}$.  Either $
\dfrac{n}{2} +1 \le k +1 \le n$ or $1 < k+1 \le \dfrac{n}{2}$.  We treat both cases.  

Suppose $\dfrac{n}{2}+1 \le k+1 \le n$, so that $s_{k+1} > \dfrac{n}{2}$.  Since $s_1 \le s_2 \le \dots \le s_n$, $s_j > r_j = \dfrac{n}{2}$ for all $j$, $k+1 \le j \le n$.  Since $k+1$ is the first index of $S$ such that $s_{k+1} > r_{k+1}$, it follows that $d(R_n, S) = 2\displaystyle\sum\limits_{i \in L} |r_i - s_i| = 2\displaystyle\sum\limits_{i=1}^k (r_i - s_i)$.  Let $\alpha$ be a non-negative integer such that $\alpha + \dfrac{n}{2} = k$.  So, $d(R_n,S) = 2\displaystyle\sum\limits_{i=1}^k (r_i - s_i) = 2\left(\dfrac{n}{2}\left(\dfrac{n}{2} - 1\right) + \alpha\left(\dfrac{n}{2}\right) - \displaystyle\sum\limits_{i=1}^k s_i\right)$.  To complete this case, we now show that $\dfrac{n}{2}\left(\dfrac{n}{2} - 1\right) + \alpha\left(\dfrac{n}{2}\right) - \dbinom{\frac{n}{2}+\alpha}{2} \le \dfrac{n^2 - 2n}{8}$.  Aiming for a contradiction, suppose that $\dfrac{n}{2}\left(\dfrac{n}{2} - 1\right) + \alpha\left(\dfrac{n}{2}\right) - \dbinom{\frac{n}{2}+\alpha}{2} > \dfrac{n^2 - 2n}{8}$.  This implies that $\dfrac{n^2}{4} - \dfrac{n}{2} + \dfrac{n\alpha}{2} - \dfrac{(\frac{n}{2} + \alpha)(\frac{n}{2} + \alpha - 1)}{2} > \dfrac{n^2 - 2n}{8}$, or $2n^2 - 4n + 4n\alpha - 4\left(\dfrac{n}{2} + \alpha\right)\left(\dfrac{n}{2} + \alpha - 1\right) > n^2 - 2n$, or $n^2 - 2n +4n\alpha - n^2 - 2n\alpha + 2n - 2n\alpha - 4\alpha^2 +4\alpha > 0$, or $- 4\alpha^2 + 4\alpha > 0$, or $\alpha(\alpha - 1) < 0$.  So, either $\alpha < 0$ and $\alpha > 1$ or $\alpha > 0$ and $\alpha < 1$.  Since $\alpha$ is a non-negative integer, both situations produce contradictions.  So, $\dfrac{n}{2}\left(\dfrac{n}{2} - 1\right) + \alpha\left(\dfrac{n}{2}\right) - \dbinom{\frac{n}{2}+\alpha}{2} \le \dfrac{n^2 - 2n}{8}$, as desired.  Thus, $d(R_n,S) = 2\left(\dfrac{n}{2}\left(\dfrac{n}{2} - 1\right) + \alpha\left(\dfrac{n}{2}\right) - \displaystyle\sum\limits_{i=1}^k s_i\right) \le 2\left(\dfrac{n}{2}\left(\dfrac{n}{2} - 1\right) + \alpha\left(\dfrac{n}{2}\right) - \dbinom{\frac{n}{2} +\alpha}{2}\right)$.  The last inequality follows from the fact that $S$ is a score sequence and so, satisfies \emph{(1)}.  By the argument above, we deduce that $d(R_n,S) \le \dfrac{n^2-2n}{4} = d(R_n,Tr_n)$, in the case $\dfrac{n}{2}+1 \le k+1 \le n$.

We now treat the other case for the value of $k+1$.  Suppose $1 < k+1 \le \dfrac{n}{2}$, that is $s_{k+1} > \dfrac{n}{2} - 1$.  Note that $s_i > r_i$ for all $i$, $k+1 \le i \le \dfrac{n}{2}$.  However, it is possible that there exists some $j$, $\dfrac{n}{2}+1 \le j \le n$, such that $s_j \le r_j = \dfrac{n}{2}$.  That is, $j \in L = \{i | s_i \le r_i\}$.  Since $\dfrac{n}{2} < s_{k+1} \le s_j \le \dfrac{n}{2}$, the only way this can occur is if $s_j = \dfrac{n}{2}$.  So, for all such $j$, $\dfrac{n}{2}+1 \le j \le n$ where $s_j = \dfrac{n}{2}$, we see that $|r_j - s_j| = 0$.  Thus, when we consider $\displaystyle\sum\limits_{i \in L} |r_i - s_i|$, we can ignore all indices in $L$ which are greater than $k$.  So, because $s_1 \le s_2 \le \dots \le s_n$ and $s_k \le r_k = \dfrac{n}{2} - 1$ implies that $s_i \le r_i$ for all $i$, $1 \le i \le k$, we deduce $\displaystyle\sum\limits_{i \in L} |r_i - s_i| = \displaystyle\sum\limits_{i = 1}^k |r_i - s_i|$.  Thus, $d(R_n,S) = 2\displaystyle\sum\limits_{i=1}^k (r_i - s_i) = 2\left(\left(\dfrac{n}{2} - 1\right)k - \displaystyle\sum\limits_{i=1}^k s_i\right)$.  To complete this case we show that $\left(\dfrac{n}{2} - 1\right)k - \dbinom{k}{2} \le \dfrac{n^2 - 2n}{8}$.  Suppose, to the contrary, that $\left(\dfrac{n}{2} - 1\right)k - \dbinom{k}{2} > \dfrac{n^2 - 2n}{8}$.  This implies that $\dfrac{n}{2}k - k - \dfrac{k^2 - k}{2} > \dfrac{n^2-2n}{8}$, or $4nk - 8k - 4k^2 + 4k > n^2 - 2n$, or $- 4k^2 - 4k - 4nk +n^2 - 2n > 0$, or $k^2 - (n - 1)k + \dfrac{n^2 - 2n}{4} < 0$, or $\left(k - \dfrac{n}{2}\right)\left(k - \left(\dfrac{n}{2} - 1\right)\right) < 0$.  So, either $k > \dfrac{n}{2}$ and $k < \dfrac{n}{2} - 1$, or $k < \dfrac{n}{2}$ and $k > \dfrac{n}{2} - 1$.  Since $k$ is an integer and $n$ is even, both situations produce contradictions.  Consequently, $\left(\dfrac{n}{2} - 1\right)k - \dbinom{k}{2} \le \dfrac{n^2 - 2n}{8}$, as desired.  Thus, $d(R_n,S) = 2\left(\left(\dfrac{n}{2} - 1\right)k - \displaystyle\sum\limits_{i=1}^k s_i\right) \le 2\left(\left(\dfrac{n}{2} - 1\right)k - \dbinom{k}{2}\right)$.  The last inequality follows from the fact that $S$ is a score sequence and so, satisfies \emph{(1)}.  By the argument above, we deduce that $d(R_n,S) \le \dfrac{n^2-2n}{4} = d(R_n,Tr_n)$.

In any case, we see that $d(R_n,S) \le d(R_n, Tr_n)$, as desired.
\end{proof}

In summary, as explained prior to Theorem \ref{thm:distance}, the maximum number of down jumps that the algorithm will produce is $\dfrac{d(R_n, Tr_n)}{2}$ which is $\dfrac{n^2 - 1}{8}$ if $n$ is odd, or $\dfrac{n^2 - 2n}{8}$ if $n$ is even.  That is, it is a $O(n^2)$ algorithm.

\section{The total order $(\bS_n, \preceq)$ and two other proofs}\label{sec:5}

Let $S \in \bS_n, S \neq Tr_n$.  The first proof in \cite{GriggsReid} to show that $S$ is a score sequence starts with $Tr_n$ and jumps down the totally ordered set $(\bS_n, \preceq)$ to $S$ so that a sequence is a score sequence if and only if the sequence reached by a jump is a score sequence.  If $U \neq S$ is the current sequence in this process (starting with $U = Tr_n$), then the next jump is determined via three indices.  Let $\alpha$ denote the smallest index such that $u_{\alpha} < s_{\alpha}$, let $\beta$ denote the largest index such that $u_{\beta} = u_{\alpha}$, and let $\gamma$ denote the smallest index such that $u_{\gamma} > s_{\gamma}$.  Increase $u_{\beta}$ by 1, decrease $u_{\gamma}$ by 1, and jump down $(\bS_n, \preceq)$ to the resulting new sequence.  

We illustrate these jumps for $S = (2,2,2,3,3,3)$.  The appropriate $\beta$th and $\gamma$th positions are underlined in the following:
\vspace{-12pt}
\begin{center}
$(\underline{0},1,2,3,\underline{4},5) \succ (1,\underline{1},2,3,3,\underline{5}) \succ (\underline{1},2,2,3,3,\underline{4}) \succ (2,2,2,3,3,3)$.  
\end{center}
\vspace{-12pt}
Note that a sequence obtained by this algorithm need not be covered (in the total order $(\bS_n, \preceq)$) by the sequence from which it was obtained.  For example, 
\vspace{-12pt}
\begin{center}
$(1,1,2,3,3,5) \succ (0,1,2,4,4,4) \succ (1,2,2,3,3,4)$.
\end{center}
\vspace{-12pt}

In \cite{GriggsReid} it is shown that after $\frac{1}{2}d(Tr_n, S)$ such jumps, the sequence $S$ is reached, and $S$ is a score sequence.  If $S \neq R_n, Tr_n$, this algorithm jumps down $(\bS_n, \preceq)$ from $Tr_n$ to $S$, and the algorithm in Section \ref{proof} jumps down $(\bS_n, \preceq)$ from $S$ to $R_n$, so $S$ is the only common sequence.  Both algorithms can be employed to jump down $(\bS_n, \preceq)$ from $Tr_n$ to $R_n$, and the only resulting common sequence, besides $Tr_n$ and $R_n$, is the next-to-last sequence given by $\left(\dfrac{n-3}{2}, \dfrac{n-1}{2}, \dots, \dfrac{n-1}{2}, \dfrac{n+1}{2}\right)$, if $n$ is odd, or $\left(\dfrac{n-4}{2}, \dfrac{n-2}{2}, \dots, \dfrac{n-2}{2}, \dfrac{n}{2}, \dots, \dfrac{n}{2}, \dfrac{n+2}{2}\right)$, if $n$ is even.  So clearly, these two algorithms are quite distinct.  Moreover, if $S \in \bS_n$ is such that $\displaystyle\sum\limits_{i=1}^k s_i = \dbinom{k}{2}$ for some $k, 1 \le k < n$ and $s_{k+1} > k$, then the first algorithm in \cite{GriggsReid} eventually jumps to a sequence $S'$ in which the first $k$ terms are $s_1, s_2, \dots, s_k$ (in that order) strictly before reaching sequences in which the $(k+1)$st term is $s_{k+1}$, and all subsequent sequences obtained after $S'$ start with $s_1, s_2, \dots, s_k$.  In effect, $S$ is not a strong score sequence, so in any $n$-tournament $T$ with score sequence $S$, where $V(T) = \{v_1, v_2, \dots, v_n\}$ and the score of $v_i$ is $s_i, 1 \le i \le n$, the set $\{v_1, v_2, \dots, v_k\}$ induces a union of strong components of $T$, but $\{v_1, v_2, \dots, v_{k+1}\}$ does not.  This means that the scores of the strong components of $T$ are produced in order from the terminal component to the initial component, and once a sequence is reached that starts with $s_1, s_2, \dots, s_k$, all subsequent sequences start with $s_1, s_2, \dots, s_k$.  On the other hand, as described in Section \ref{proof}, for such an $S$, the very first jump in the algorithm in Section \ref{proof} amounts to reversing one path in $T$ from a vertex in the initial strong component of $T$ to a vertex in the terminal strong component resulting in a strong score sequence.

By an argument similar to that given in Section 4, the first algorithm in \cite{GriggsReid} also provides an $O(n^2)$ algorithm for constructing a tournament with score sequence $S$.  So, in practice, the efficiency of the algorithm in this paper compared to the first one described in \cite{GriggsReid} is determined by whether $S$ is closer to $Tr_n$ or $R_n$ via the metric $d(\cdot,\cdot)$.  That is, compare $\frac{1}{2}d(R_n,S)$, the number of jumps in the algorithm of this paper, with $\frac{1}{2}d(Tr_n,S)$, the number of jumps in the first proof in \cite{GriggsReid}.

For example, if $S= (1,1,1,4,4,4)$, then $S \in \bS_6$, $\frac{1}{2}d(Tr_6,S) = \frac{1}{2}(4) = 2$, while $d(R_6,S) = \frac{1}{2}(6) = 3$, but if $S = (1,2,3,3,3,3)$, then $S \in \bS_6$, $\frac{1}{2}d(Tr_6,S) = \frac{1}{2}(6) = 3$, while $\frac{1}{2}d(R_6,S) = \frac{1}{2}(2) = 1$.  It is possible for these two values to be equal.  For example, if $S = (2,2,2,3,4,4,4)$, then $\frac{1}{2}d(R_7,S) = \frac{1}{2}(6) = \frac{1}{2}d(Tr_7,S)$.

The second proof in \cite{GriggsReid} also describes jumps between sequences in $(\bS_n, \preceq)$, but these jumps are upward jumps towards the maximum sequence in $(\bS_n, \preceq)$, $Tr_n$.  To describe these jumps up, let $S \in \bS_n$, $S \neq Tr_n$.  Define $k$ to be the smallest index so that $s_k = s_{k+1}$, and define $m$ to be the number of occurrences of the value $s_k$ in $S$.  Reduce $s_k$ by 1, increase $s_{k+m-1}$ by 1, and jump up to the resulting new sequence.  Repeat until $Tr_n$ is obtained.  As shown in \cite{GriggsReid}, $S$ is a score sequence.  

We illustrate these jumps for $S=(1,1,3,3,3,4)$.  The appropriate $k$th and $k+m-1$th positions are underlined in the following:
\vspace{-12pt}
\begin{center}
$\begin{array}{l l l l l l}
&(\underline{1},\underline{1},3,3,3,4)& \prec& (0,2,\underline{3},3,\underline{3},4)& \prec& (0,\underline{2},\underline{2},3,4,4)\\
\prec &(0,1,\underline{3},\underline{3},4,4)& \prec& (0,1,2,\underline{4},4,\underline{4}) &\prec &(0,1,2,3,4,5).  
\end{array}$
\end{center}
\vspace{-12pt}

Again, a sequence obtained by this algorithm need not cover any sequence obtained previously by this algorithm in the total order $(\bS_6, \preceq)$; for example, $(0,2,3,3,3,4) \prec (1,2,2,2,4,4) \prec (0,2,2,3,4,4)$.  And, it is easy to see that most of the jumps upward from $R_n$ to $Tr_n$ using this algorithm are not the reverse of the jumps downward using either of the two algorithms discussed above.

The metric $d(\cdot,\cdot)$ used earlier is not useful for this algorithm in order to measure the number of jumps required to reach $Tr_n$ from $S \neq Tr_n$.  Indeed, 20 jumps are required for $S = (3,3,3,3,4,4,4,4)$, but $\frac{1}{2}d(Tr_8,S) = 6$.  This issue was not addressed in \cite{GriggsReid}.  For $S \in \bS_n$, let $c(S) = \dbinom{n}{3} - \displaystyle\sum\limits_{i=1}^n \dbinom{s_i}{2}$.  If $S', S'' \in \bS_n$ are such that $S''$ is obtained by a single jump up from $S'$ as described above, then 
\vspace{-12pt}
\begin{center}
$\begin{array}{l l l l}
&c(S') - c(S'') \\
=& - \left(\dbinom{s_k}{2} + \dbinom{s_k+m-1}{2}\right) + \left(\dbinom{s_k-1}{2} + \dbinom{s_{k+m-1}+1}{2}\right) \\
=& - \left(\dbinom{s_k}{2} - \dbinom{s_k-1}{2}\right) + \left(\dbinom{s_{k+m-1}+1}{2} - \dbinom{s_{k+m-1}}{2}\right) \\
=& - (s_k - 1) + s_{k+m-1} = 1,
\end{array}$
\end{center}
\vspace{-12pt}
since $s_k = s_{k+m-1}$.  That is, one up jump from $S'$ corresponds to a decrease of $c(S')$ by 1.  Thus, the number of up jumps in this algorithm applied to the initial sequence $S \in \bS_n$ is equal to $c(S)$.  Now, in any $n$-tournament with score sequence $S$, there are $\dbinom{n}{3}$ subtournaments of order 3 and $\dbinom{s_i}{2}$ transitive subtournaments of order 3 with transmitter at a vertex of score $s_i$.  A 3-tournament is either a cycle or transitive, so, $c(S)$ yields the number of cycles of length 3 in $T$.  Now, max$\{c(S) | S \in \bS_n\} = c(R_n) = \begin{cases}
  \frac{1}{24}(n^3 - n), & \text{ if $n$ is odd} \\
  \frac{1}{24}(n^3 - 4n), & \text{ if $n$ is even}\\
\end{cases}$ \cite{KendallBabington}.

Consequently, as $c(S) \le c(R_n)$ for all $S \in \bS_n$, the maximum number of up jumps that this algorithm will produce is obtained when the algorithm starts with $S = R_n$, and that number is given by the value of $c(R_n)$ above.  That is, it is a $O(n^3)$ algorithm.

\end{document}